\theoremstyle{definition}
\newtheorem{defi}{Definition}[section]
\newtheorem{rem}[defi]{Remark}
\newtheorem{prop}[defi]{Proposition}
\newtheorem{theo}[defi]{Theorem}
\newtheorem{con}{Construction}
\newcommand{\C}{\mathsf{C}}
\newcommand{\dgvect}{\mathsf{DGVect}(\mathsf{k})}
\newcommand{\id}{\operatorname{id}}
\newcommand{\aaa}{\mathbf{a}}
\newcommand{\m}{\mathbf{m}}
\newcommand{\co}{\colon\thinspace}
\newcommand\RSloop{\@ifnextchar\bgroup\RSloopa\RSloopb}
\newcommand\RSloopa[1]{\bgroup\RSloop#1\relax\egroup\RSloop}
\newcommand\RSloopb[1]%
\newcommand\X{0}
\newcommand\RS[1]%
\newcommand\RSdef[1]{\expandafter\def\csname RS:#1\endcsname}
\newlength\RSu
\newsavebox\sba\savebox\sba[5mm][l]{\RS{{Mm}{Ii}{R{ir}}}}
\newsavebox\sbb\savebox\sbb[5mm][l]{\RS{{Ss}{Ii}{L{il}}}}
\newsavebox\sbc\savebox\sbc[5mm][l]{\RS{{L{lir}}{Rr}}}
\newsavebox\sbd\savebox\sbd[5mm][l]{\RS{{Ll}{Ilr}{Rr}}}
\newsavebox\sbe\savebox\sbe[5mm][l]{\RS{{Ll}{R{lir}}}}
\newsavebox\lbo\savebox\lbo[5mm][l]{\RS{ B {AAAl} {BB{Al}{Bi}{Cr}} {CCCr}}}
\newsavebox\mbo\savebox\mbo[5mm][l]{\RS{BI {Ll} {Ii} {Rr} }}
\newsavebox\rbo\savebox\rbo[5mm][l]{\RS{ B{ {AAlr} {CRlr} }}}
\title{Cellular chains on freehedra and operadic pairs}
\author{Daria Poliakova}
\begin{document}

\maketitle
 \begin{abstract}
The paper is devoted to explaining the operadic meaning of freehedra, a family of polytopes originally defined to study free loop spaces. We introduce the notion of operadic pairs and algebras over them. Cellular chains on Stasheff associahedra and Stasheff multiplihedra assemble into a two-colored operadic pair that governs $A_\infty$-algebras and $A_\infty$-modules over them, with maps that are $A_\infty$ in both colors. An important quotient of this operadic pair is given by cellular chains on cubes and freehedra.
\end{abstract}
 \tableofcontents
\section{Introduction}
 The present paper grew out of the author's attempts to understand and extend the constructions of \cite{ACD}. \\
 
 In \cite{AC}, representations up to homotopy of a derived algebraic group $G$ were introduced as $A_\infty$-comodules over the group coalgebra $A: = \mathcal{O}(G)$. They form a DG-category $\operatorname{Rep}^h(G)$. In \cite{ACD} it was proved that the homotopy category of $\operatorname{Rep}^h(G)$ is monoidal. \\
 
 To construct tensor products, the authors used the language of DB-algebras and DB-bimodules (see section 3.2 and 6.1 in \cite{ACD}) and studied the algebra of $\operatorname{Rep}^h(G)$ by means of a certain universal DB-pair $(\Omega,T)$. The tensor product of objects in $\operatorname{Rep}^h(G)$ was given by a diagonal $\Omega \to \Omega \boxtimes \Omega$, and the tensor product of morphisms was given by a diagonal $T \to T \boxtimes T$. The resulting tensor product of morphisms was only homotopy associative and homotopy consistent with compositions. It was left as an open question whether this monoidal structure admits some sort of a coherent lift to DG-level. \\

 In operadic language, the DB-algebra $\Omega$ corresponds to an $(\aaa,\m)$-colored operad $\Omega$ governing pairs of a DG-algebra and an $A_\infty$-module over it. The DB-bimodule $T$ corresponds to an operadic $\Omega$-bimodule $T$ governing maps of such pairs which are homomorphisms in color $\aaa$ and $A_\infty$ in color $\m$. We axiomatize the situation by defining {\em operadic pairs} and algebras over them. The pair $(\Omega,T)$ provides an example of an operadic pair. \\
 
The context for operadic pairs is as follows. The category of $A_\infty$-algebras is {\em not} the category of algebras over the DG-operad $A_\infty$, because the latter category does not have enough morphisms. However, there exists an operadic pair $(A_\infty, M_\infty)$, for which there is an equivalence of categories $A_\infty\operatorname{Alg} \simeq \operatorname{Alg}(A_\infty, M_\infty)$.  The operadic pair $(A_\infty, M_\infty)$ consists of cellular chains on {\em Stasheff associahedra} and {\em Stasheff multiplihedra}. The same holds for its $(\aaa,\m)$-colored version $(A_\infty^{\operatorname{Col}}, M_\infty^{\operatorname{Col}})$. Algebras over  $(A_\infty^{\operatorname{Col}}, M_\infty^{\operatorname{Col}})$ are pairs of an $A_\infty$-algebra and an $A_\infty$-module over it, and maps of such pairs are $A_\infty$ in both colors. The operadic pair $(\Omega,T)$ above is a certain quotient of $(A_\infty^{\operatorname{Col}}, M_\infty^{\operatorname{Col}})$. \\

On the polyhedral side, taking quotients corresponds to contraction. The contraction of associahedra corresponding to the projection $A_\infty^{\operatorname{Col}} \to \Omega$ was known from \cite{ACD}; it resulted in {\em cubes}. For the projection $M_\infty^{\operatorname{Col}} \to T$, the corresponding contraction was not previously known. In this paper we compute it and prove the following: \\

{\bf Theorem.} There exists an isomorphism of chain complexes $C_*(\mathcal{F}_n) \simeq  T(\aaa^n,\m;\m)$, where $\mathcal{F}_n$ are {\em} freehedra of \cite{San}. \\

 These polytopes were originally introduced  to study free loop spaces, and until now they bore no relation to operads. Therefore, the current paper establishes a dictionary between \cite{San} and \cite{ACD}. In particular, it seems that the freehedral diagonal of \cite{San} coincides with the diagonal $T \to T \boxtimes T$ of \cite{ACD}. In further research we expect to use polyhedral methods to define {\em weakly Hopf} structure on the operadic pair $(\Omega,T)$. This would provide a weakly monoidal structure on the DG-category $\operatorname{Rep}^h(G)$, giving a lift from the homotopy level.

\subsection{Organization of the paper}
This paper aims to be as self-contained as possible, thus the length.  In Section 2 we give an overview of operadic theory in one and two colors, and introduce operadic pairs. In Section 3 we present associahedra and multiplihedra. In section 4 we summarize the existing definitions of freehedra. In Section 5 we prove our main theorem, which provides an operadic meaning for freehedra. In Section 6 we discuss the existing projections betweeen polyhedral families in terms of operadic pairs. In Section 7 we define strictly Hopf operadic pairs and prepare the ground for studying weakly Hopf operadic pairs.

\subsection{Acknowledgements} This paper would not have been written without Jim Stasheff's advice and support. I am also grateful to Sergey Arkhipov, Ryszard Nest, Lars Hesselholt, Nathalie Wahl, Camilo Abad, Stephen Forcey and Samson Saneblidze for discussions and interest. Finally, I am grateful to Vladimir Dotsenko, Timothy Logvinenko and Svetlana Makarova for inviting me to present this research at their seminars.

\section{Operads and operadic pairs}
Let $\C$ be a closed monoidal category with sums. 

\begin{defi}
In the category ${\mathbb{N}}$-$\operatorname{Seq}(\C)$ of $\mathbb{N}$-sequences in $\C$, an object $\mathcal{P}$ is a collection of $\mathcal{P}(i) \in \C$ for $i \geq 1$. For $\mathcal{P}$ and $\mathcal{Q}$ in ${\mathbb{N}}$-$\operatorname{Seq}(\C)$, their tensor-product $\mathcal{P} \odot \mathcal{Q}$ is given by
$$ (\mathcal{P} \odot \mathcal{Q})(n) = \bigoplus_{i_1+ \ldots +i_k = n} \mathcal{P}(k) \otimes \mathcal{Q}(i_1) \otimes \ldots \otimes \mathcal{Q}(i_k)$$
\end{defi}

This makes ${\mathbb{N}}$-$\operatorname{Seq}(\C)$ a non-symmetric monoidal category. The unit is the ${\mathbb{N}}$-sequence $\underline{\mathbb{I}}$ with $\underline{\mathbb{I}}= \mathbb{I}$ and $\underline{\mathbb{I}}(n) = 0$ for $n \geq 1$, where $\mathbb{I}$ is the monoidal unit of $\C$ and $0$ is the initial object of $\C$.

\begin{defi}
An non-symmetric operad in $\C$ is a unital algebra in  ${\mathbb{N}}$-$\operatorname{Seq}(\C)$.
\end{defi}

If $\mathcal{P}$ is an operad, we say that $\mathcal{P}(k)$ is  the object of arity $k$ operations. Explicitly, the operadic structure on $\mathcal{P}$ is given by a collection of composition maps
$$ \circ_{i_1,\ldots,i_k} \colon \mathcal{P}(k) \otimes \mathcal{P}(i_1) \otimes \ldots \otimes \mathcal{P}(i_k) \to \mathcal{P}(i_1+ \ldots+ i_k)$$
satisfying associativity conditions. The existence of a unit allows us to express all such compositions through
$$ \circ_{i} \colon \mathcal{P}(k) \otimes \mathcal{P}(l) \to \mathcal{P}(k+l-1)$$

\begin{defi}
Every object $X \in \C$ gives rise to its operad $\underline{\operatorname{End}}_X$, with $\underline{\operatorname{End}}_X(n) = \underline{\operatorname{Hom}}_\C(X^{\otimes n},X)$ and with operadic structure coming from compositions.
\end{defi}

\begin{defi}
For an operad $\mathcal{P}$ and an object $X$, the structure of a $\mathcal{P}$-algebra on $X$ is a map of operads $\mathcal{P} \to \underline{\operatorname{End}}_X$. If $X$ and $Y$ are $\mathcal{P}$-algebras, their map in $\operatorname{Alg}(\mathcal{P})$ is a map $X \to Y$ such that for any $n$ the diagram below commutes: 
\[ \begin{tikzcd}[column sep=huge]
\mathcal{P}(n) \otimes X^{\otimes n} \arrow{r}{\id_{\mathcal{P}(n)} \otimes f^{\otimes n}} \arrow{d}{} & \mathcal{P}(n) \otimes Y^{\otimes n}  \arrow{d}{} \\%
X \arrow{r}{f} & Y
\end{tikzcd}
\]

\end{defi}

Let $\C$ be the category of chain complexes $\dgvect$. Operads in $\dgvect$ are called DG-operads. The simplest DG-operad is $Ass$, with $Ass(n) = k$ for any $n$. Algebras over $Ass$ are DG algebras. The key operad for this paper is a classical resolution of $Ass$ called $A_\infty$. For detailed discussion of $A_\infty$-formalism, see for example \cite{Kel}.

\begin{defi}
The DG-operad $A_\infty$ is generated by operations $\mu_n$ of arity $n$ and degree $2-n$ for $n \geq 2$, with differential
$$d(\mu_n) = \sum_{i+j+k = n} \mu_{i+1+k}(\id {\otimes^i} \otimes \mu_j \otimes \id^{\otimes j})$$
\end{defi}

$A_\infty$-algebras are homotopy-associative algebras with an explicit system of all the higher coherences.

\begin{rem}
The category of algebras over $A_\infty$ is {\em not} what people usually mean by the category of $A_\infty$-algebras. The problem is that $\operatorname{Alg}(A_\infty)$ doesn't have enough morphisms. A morphism $A \to B$ in $\operatorname{Alg}(A_\infty)$ has to strictly respect multiplication $\mu_2$ and all the higher operations. A true $A_\infty$-morphism $A \to B$ should respect multiplication $\mu_2$ only up to homotopy, and includes the data of all the higher coherences $A^{\otimes n} \xrightarrow{\operatorname{deg}1-n} B$.
\end{rem}

To combat this difficulty we use operadic bimodules, i.e. bimodules in the category ${\mathbb{N}}$-$\operatorname{Seq}(\C)$. Note that this is not a symmetric monoidal category, so left and right actions differ a lot. 

\begin{defi}
For objects $X, Y \in \C$, let $\underline{\operatorname{Hom}}_{X,Y}$ be an $\mathbb{N}$-sequence given by $\underline{\operatorname{Hom}}_{X,Y}(n) = \underline{\operatorname{Hom}}_\C(X^{\otimes n},Y)$. It has a natural structure of a right module over $\underline{\operatorname{End}}_X$ and of a left module over $\underline{\operatorname{End}}_Y$ given by compositions.
\end{defi}

Below we present the standard resolution of the trivial $Ass$-bimodule given by $Ass$ itself.

\begin{defi}
$M_\infty$ is a bimodule over $A_\infty$ generated by $f_n$ of arity $n$ and degree $1-n$ for $n \geq 0$, with differentials
$$d(f_n) = \sum f_{r+1+t} (\id^{\otimes r} \otimes \mu_s \otimes \id^{\otimes r}) + \sum \mu_r (f_{i_1}\otimes \ldots \otimes f_{i_r})$$
\end{defi}

\begin{prop}
Let $A$, $B$ be two $A_\infty$-algebras with structure maps $\alpha \colon A_\infty
 \to \underline{\operatorname{End}}_A$ and $\beta \colon A_\infty
 \to \underline{\operatorname{End}}_B$. Then any $A_\infty$-morphism $f \colon A \to B$ is given by a structure map $\phi \colon M_\infty \to \underline{\operatorname{Hom}}_{X,Y}$ of bimodules over $A_\infty$,
 where $\underline{\operatorname{Hom}}_{X,Y}$ is viewed as a bimodule over $A_\infty$ via restrictions along $\alpha$ and $\beta$.
 \end{prop}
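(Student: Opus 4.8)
The plan is to unwind the definition of a bimodule structure map $\phi \colon M_\infty \to \underline{\operatorname{Hom}}_{A,B}$ into components and identify these components with the Taylor coefficients of an $A_\infty$-morphism, checking that the bimodule-map conditions (compatibility with the differential, left action, right action) translate exactly into the defining identities of an $A_\infty$-morphism. First I would observe that, since $M_\infty$ is freely generated as a bimodule by the elements $f_n$ (arity $n$, degree $1-n$, $n \geq 0$), a bimodule map $\phi$ out of $M_\infty$ is determined by the images $\phi(f_n) \in \underline{\operatorname{Hom}}_{A,B}(n) = \underline{\operatorname{Hom}}_\C(A^{\otimes n}, B)$. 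Write $f_n := \phi(f_n)$; these are degree $1-n$ maps $A^{\otimes n} \to B$, which is precisely the data of the components of a (not necessarily unital, or suitably normalized) $A_\infty$-morphism $f \colon A \to B$.

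Next I would spell out what it means for $\phi$ to be a chain map of bimodules. There are two things to check. The bimodule structure on $\underline{\operatorname{Hom}}_{A,B}$ over $A_\infty$ is, by the definition in the excerpt, obtained by restricting the left $\underline{\operatorname{End}}_B$-action along $\beta$ and the right $\underline{\operatorname{End}}_A$-action along $\alpha$; concretely the right action inserts copies of the operations $\mu_s$ (acting on $A$ via $\alpha$) into the inputs, and the left action post-composes with $\mu_r$ (acting on $B$ via $\beta$) after plugging several maps into its slots. Since $\phi$ is assumed to be a bimodule map, it is automatically equivariant for these actions; the only real content is that $\phi$ commutes with the differential. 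Applying $\phi$ to the defining formula
$$ d(f_n) = \sum f_{r+1+t} (\id^{\otimes r} \otimes \mu_s \otimes \id^{\otimes r}) + \sum \mu_r (f_{i_1} \otimes \ldots \otimes f_{i_r}) $$
and using equivariance to push $\phi$ past the $\mu$'s (which go to $\alpha(\mu_s)$ on the right and $\beta(\mu_r)$ on the left), I would obtain
$$ d(f_n) = \sum f_{r+1+t}(\id^{\otimes r} \otimes \alpha(\mu_s) \otimes \id^{\otimes t}) + \sum \beta(\mu_r)(f_{i_1} \otimes \ldots \otimes f_{i_r}), $$
with the differential on the left now being the internal differential of $\underline{\operatorname{Hom}}_\C(A^{\otimes n},B)$, which encodes the differentials on $A$ and $B$. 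This is exactly the standard $A_\infty$-morphism relation, so $\phi$ being a bimodule map is equivalent to $(f_n)_{n}$ being an $A_\infty$-morphism. Conversely, given an $A_\infty$-morphism, one defines $\phi$ on generators by its components and extends by bimodule-freeness; the same computation run backwards shows the extension is a well-defined chain map.

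The step I expect to require the most care is the bookkeeping that matches the two sums in $d(f_n)$ — the ``inner'' insertions of $\mu_s$ into a single $f_{r+1+t}$ and the ``outer'' post-composition $\mu_r(f_{i_1} \otimes \cdots \otimes f_{i_r})$ — against the two sums in the classical $A_\infty$-morphism identity, including getting all index ranges and the Koszul signs to agree; this hinges on checking that the degree conventions ($\mu_n$ in degree $2-n$, $f_n$ in degree $1-n$) make the signs in the bimodule structure of ${\mathbb{N}}$-$\operatorname{Seq}(\dgvect)$ coincide with the usual ones, and on being explicit about the normalization (whether $f_0$ is allowed, i.e. pointed vs.\ non-pointed morphisms). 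A secondary subtlety is that the statement as written is a bijection between $A_\infty$-morphisms and bimodule structure maps, so I would also note that this correspondence is visibly natural and inverse to the obvious map in the other direction, which is immediate once the generators-and-relations description of $M_\infty$ is in hand.
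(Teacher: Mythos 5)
The paper gives no proof of this proposition at all: it is stated as a classical fact (cf.\ Keller's survey) and the text moves straight on to the composition map $c$. Your blind reconstruction is the standard argument and is correct: $M_\infty$ is quasi-free as a bimodule on the generators $f_n$, so a bimodule map out of it is freely determined by $\phi(f_n)\in\underline{\operatorname{Hom}}_\C(A^{\otimes n},B)$, and the single remaining condition --- compatibility with the differential, after pushing the left and right actions through $\beta$ and $\alpha$ --- is literally the $A_\infty$-morphism identity. Your flagged caveats (the internal differential of the Hom-complex absorbing the $\mu_1$-terms, signs, and the $n\ge 0$ versus $n\ge 1$ normalization of the generators, the latter arguably a typo in the paper's definition of $M_\infty$) are exactly the right places where care is needed; nothing in the proposal would fail.
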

 
Note that the composition of $A_\infty$-morphisms is induced by a map 
$$c \co M_\infty \to M_\infty \otimes_{A_\infty} M_\infty$$ which is given on generators by 
$$ c(f_n) = \sum f_i \otimes f_{n-i+1}$$
and the identity $A_\infty$-morphisms are induced by a map 
$$\epsilon \co M_\infty \to A_\infty$$ which is given on generators by
$$ \epsilon(f_n) = \begin{cases} \id & n = 1 \\ 0 & n>1 \end{cases}$$

This suggests the following new definition.

\begin{defi}
An {\em operadic pair} is a pair $(\mathcal{P},\mathcal{M})$ where $\mathcal{P}$ is an operad and $\mathcal{M}$ is a counital coalgebra in operadic bimodules over $\mathcal{P}$,  with comultiplication $c \co \mathcal{M} \to \mathcal{M} \otimes_{\mathcal{P}} \mathcal{M}$ and counit $\epsilon\co \mathcal{M} \to \mathcal{P}$.
\end{defi}

\begin{defi}
For an operadic pair $(\mathcal{P},\mathcal{M})$, an object of $\operatorname{Alg}(\mathcal{P},\mathcal{M})$ is just a $\mathcal{P}$-algebra. For two such objects $A$ and $B$, with structure maps $\chi_A: \mathcal{P} \to \underline{\operatorname{End}}_{A}$ and $\chi_B: \mathcal{P} \to \underline{\operatorname{End}}_{B}$, a morphism $f$ in $\operatorname{Alg}(\mathcal{P},\mathcal{M})$ is given by a structure map of $\mathcal{P}$-bimodules $\chi_f: \mathcal{M} \to \underline{\operatorname{Hom}}_{A,B}$. The composition is induced by $c$ and the identity morphisms are incduced by $\epsilon$.
\end{defi}

Then $(A_\infty,M_\infty)$ is an example of DG-operadic pair, and the category of $A_\infty$-algebras is precisely $\operatorname{Alg}(A_\infty,M_\infty)$. \\ 

\begin{rem}
Every operad $\mathcal{P}$ forms a counital coalgebra in bimodules over itself, resulting in a trivial operadic pair $(\mathcal{P},\mathcal{P})$. For this pair, we have $\operatorname{Alg}(\mathcal{P},\mathcal{P}) = \operatorname{Alg}(\mathcal{P})$.
\end{rem}

For an operadic pair $(\mathcal{P},\mathcal{M})$, by its underlying pair we mean the pair $(\mathcal{P},\mathcal{M})$ with forgotten coalgebra structure on $\mathcal{M}$. \\

We now repeat the story with colors. Fix the set of colors $\operatorname{Col}$.

\begin{defi}
In the category ${\mathbb{N}}$-$\operatorname{Seq}_{\operatorname{Col}}(\C)$ of colored $\mathbb{N}$-sequences in $\C$, an object $\mathcal{P}$ is a collection of $\mathcal{P}(c_1,\ldots,c_k;c) \in \C$ for all tuples $c_1,\ldots, c_k, c$ with $c_i$ and $c$ in $\operatorname{Col}$. Here, $c_i$ are called input colors and $c$ is called output color. For $\mathcal{P}$ and $\mathcal{Q}$ in ${\mathbb{N}}$-$\operatorname{Seq}_{\operatorname{Col}}(\C)$, their tensor-product $\mathcal{P} \odot \mathcal{Q}$ is given by
\begin{align*}
&    (\mathcal{P} \odot \mathcal{Q})(c_1, \ldots, c_n;c) = \\
&    \bigoplus_{\substack{i_1+ \ldots +i_k = n \\ c'_1, \ldots, c'_k \in \operatorname{Col}}} \mathcal{P}(c'_1,\ldots,c'_k;c) \otimes \mathcal{Q}(c_1,\ldots, c_{i_1};c'_1) \otimes \ldots \otimes \mathcal{Q}(c_{n-i_k+1},\ldots, c_n; c'_k)
\end{align*} 
\end{defi}

Colored operads and colored operadic bimodules are defined as algebras and bimodules in this new monoidal category. 

\begin{defi}
Let $\{X_c \}_{c \in \operatorname{Col}}$ be a collection of objects in $\C$. The colored operad $\underline{\operatorname{End}}_{\{X_c \}}$ is defined by 
$$\underline{\operatorname{End}}_{\{X_c \}}(c_1,\ldots,c_n;c) = \underline{\operatorname{Hom}}_{\C}(X_{c_1} \otimes \ldots \otimes X_{c_n},X_c)$$
with operadic structure given by compositions.
\end{defi}
\begin{defi}
An algebra over a colored operad $\mathcal{P}$ is a collection of objects $\{X_c \}_{c \in \operatorname{Col}}$ with a map of operads $\mathcal{P} \to \underline{\operatorname{End}}_{\{X_c \}}$. If $\{X_c \}$ and $\{Y_c \}$ are $\mathcal{P}$-algebras, then their map in $\operatorname{Alg}(\mathcal{P})$ is a collection of maps $f_c \co X_c \to Y_c$ such that for every tuple $(c_1,\ldots,c_n,c)$ the following diagram commutes.
\[ \begin{tikzcd}[column sep=3 cm]
\mathcal{P}(c_1,\ldots,c_n;c) \otimes \bigotimes_{i=1}^n X_{c_i}  \arrow{r}{\id \otimes \bigotimes_{i=1}^n f_{c_i}} \arrow{d}{} & \mathcal{P}(c_1,\ldots,c_n;c) \otimes \bigotimes_{i=1}^n Y_{c_i}  \arrow{d}{} \\%
X_c \arrow{r}{f_c} & Y_c
\end{tikzcd}
\]

\end{defi}

\begin{defi}
Let $\{X_c \}_{c \in \operatorname{Col}}$ and  $\{Y_c \}_{c \in \operatorname{Col}}$  be two collection of objects in $\C$. The  colored $\mathbb{N}$-sequence $\underline{\operatorname{Hom}}_{\{X_c \},\{Y_c \}}$ is defined by
$$\underline{\operatorname{Hom}}_{\{X_c \},\{Y_c \}} (c_1,\ldots,c_n;c) = \underline{\operatorname{Hom}}_{\C}(X_{c_1} \otimes \ldots \otimes X_{c_n},Y_c)$$
It has the natural structure of a left module over $\underline{\operatorname{End}}_{\{X_c \}}$ and a right module over $\underline{\operatorname{End}}_{\{Y_c \}}$ given by compositions.
\end{defi}

The definition of an operadic pair can now be repeated verbatim. \\

In the rest of the paper we will only be interested in the case when $\operatorname{Col} = \{\aaa,\m\}$, with $\aaa$ for {\em algebra} and $\m$ for {\em module}. The simplest example of a colored DG-operad is $Ass^{\operatorname{Col}}$, has $Ass^{\operatorname{Col}}(\aaa,\ldots,\aaa;\aaa) = k$, $Ass^{\operatorname{Col}}(\aaa,\ldots,\aaa,\m;\m) = k$ and $0$ everywhere else. An algebra over this colored operad is a pair $(A,M)$ where $A$ is a DG-algebra and $M$ is a DG-module over $A$. Similarly to the non-colored case, the operad $Ass^{\operatorname{Col}}$ has a standard resolution $A_\infty^{\operatorname{Col}}$.

\begin{defi}
$A_\infty^{\operatorname{Col}}$ is generated by operations $\mu_n^{\aaa} \in A_\infty^{\operatorname{Col}}(\aaa^n;\aaa)$ of degree $2-n$ and $\mu_n^{\m} \in A_\infty^{\operatorname{Col}}({\aaa}^{n-1},\m;\aaa)$ of degree $2-n$, with differentials
$$d(\mu_n^\aaa) = \sum_{i+j+k = n} \mu^\aaa_{i+1+k}(\id_\aaa^{\otimes^i} \otimes \mu^\aaa_j \otimes \id_\aaa^{\otimes j})$$
$$d(\mu_n^\m) =  \sum_{\substack{i+j+k = n \\ j \geq 1, k \geq 1}} \mu^\aaa_{i+1+k}(\id_\aaa^{\otimes^i} \otimes \mu^\aaa_j \otimes \id_\m^{\otimes j}) + \sum_{\substack{i+j = n \\ j \geq 1}} \mu_{}( \id_\aaa^{\otimes i} \otimes \mu_j^\m ) $$
\end{defi}

Again, the correct category of algebras is obtained via the formalism of operadic pairs. 

\begin{defi}
The operadic bimodule $M_\infty^{\operatorname{Col}}$ is generated over $A_\infty^{\operatorname{Col}}$ by $f^\aaa_n$ and $f^\m_n$, with differentials 
$$d(f_n^\aaa) = \sum f^\aaa_{r+1+t} (\id_\aaa^{\otimes r} \otimes \mu^\aaa_s \otimes \id_\aaa^{\otimes t}) + \sum \mu^\aaa_r (f^\aaa_{i_1}\otimes \ldots \otimes f^\m_{i_r})$$

\begin{align*}
& d(f^\m_n) = \sum f^\m_{r+1+t} (\id_\aaa^{\otimes r} \otimes \mu^\aaa_s \otimes \id_\m^{\otimes t}) + \sum \mu^\aaa_r (f^\aaa_{i_1}\otimes \ldots \otimes f^\m_{i_r}) + \\
& +  \sum f^\m_{r+1+t} (\id_\aaa^{\otimes r} \otimes \mu^\m_t) 
\end{align*}

The comultiplication $c\co M_\infty^{\operatorname{Col}} \to M_\infty^{\operatorname{Col}} \otimes_{ A_\infty^{\operatorname{Col}}} M_\infty^{\operatorname{Col}} $ is given on generators by 
$$ c(f^\aaa_n) = \sum f^\aaa_i \otimes f^\aaa_{n-i+1}$$
$$ c(f^\m_n) = \sum f^\m_i \otimes f^\m_{n-i+1}$$
The counit $\epsilon \co M_\infty^{\operatorname{Col}}  \to A_\infty^{\operatorname{Col}}$ is given on generators by 
$$ \epsilon(f^\aaa_n) = \begin{cases} \id_\aaa & n = 1 \\ 0 & n>1 \end{cases}$$
$$ \epsilon(f^\m_n) = \begin{cases} \id_\m & n = 1 \\ 0 & n>1 \end{cases}$$
This makes $(A_\infty^{\operatorname{Col}}, M_\infty^{\operatorname{Col}})$ an operadic pair.
\end{defi}

In this paper, we are mainly interested in a certain quotient of  $(A_\infty^{\operatorname{Col}}, M_\infty^{\operatorname{Col}})$.

\begin{defi} Let $\Omega$ be the quotient of $A_\infty^{\operatorname{Col}}$ by the ideal $I$ generated by all $\mu^\aaa_i$ for $i > 2$. Let $T$ be a further quotient of $M_\infty^{\operatorname{Col}}/I$ by a subbimodule generated by $f^\aaa_i$ for $i>1$. 
\end{defi}

$(\Omega,T)$ remains an operadic pair. \\

Albeit in a different language, the operaic pair $(\Omega, T)$ was closely studied in \cite{ACD} in connection to representations up to homotopy. There the authors developed a convenient forest notation for bases of $\Omega$ and $T$, which we use in the main theorem of this paper. \\

\begin{defi}
A \emph{short forest} is a sequence of planar trees of depth 2. Inner edges are called branches and outer edges are called leaves. For a short forest $F$, let $l(F)$ be the number of leaves, let $b(F)$ be the number of branches and let $t(F)$ be the number of trees.
\end{defi}

Below is an example of a short forest $F$ with $l(F) = 12$, $b(F) = 8$ and $t(F) = 5$. The roots are depicted as connected with a horizontal line, the ground.

\begin{center}
\begin{forest}
for tree = {grow'=90,circle, fill, minimum width = 4pt, inner sep = 0pt, s sep = 13pt}
[{},phantom
[{},name = one [[]] [[][]] ]
[{},name = two [[]] [[]] [[]] ]
[{},name = three [[][][][]] ]
[{},name = four [[]] ]
[{},name = five [[]] ]
]
\draw[black] (one) -- (two) -- (three) -- (four) -- (five);
\end{forest}
\end{center}

For a forest $F$, denote by $F^i$ its $i$-th tree. Write $F^i$ as $(F^i_1, \ldots, F^i_{b_i})$, where $F^i_j$ denoted the number of leaves on the $j$-th branch of $i$-th tree. 

\begin{prop}
\label{isom}
The basis of $\Omega (a^i,m;m)$ is given by short forests with $l(F)$, where the degree of the forest is $t(F)-b(F)$.
\end{prop}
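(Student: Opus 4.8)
The plan is to exhibit an explicit bijection between short forests with $i$ leaves and a ``normal form'' basis of $\Omega(\aaa^i,\m;\m)$, and then to read off the degree. To a short forest $F = (F^1,\dots,F^t)$ with $j$-th tree $F^j = (F^j_1,\dots,F^j_{b_j})$ (so $b_j$ branches, $F^j_l$ leaves on the $l$-th branch, and $\sum_{j,l}F^j_l = i$) I attach the composite
$$ \xi_F \;=\; \mu^\m_{b_1+1}\big(\mathrm{prod}_{F^1_1}\otimes\cdots\otimes\mathrm{prod}_{F^1_{b_1}}\otimes\mu^\m_{b_2+1}(\cdots\mu^\m_{b_t+1}(\mathrm{prod}_{F^t_1}\otimes\cdots\otimes\mathrm{prod}_{F^t_{b_t}}\otimes\id_\m)\cdots)\big), $$
that is: one module operation $\mu^\m$ per tree, nested along the module strand, with the $\aaa$-slots of the operation of tree $j$ filled by the iterated products $\mathrm{prod}_{F^j_l}$ of its branches, and the $\aaa$-inputs ordered tree by tree and, within a tree, branch by branch. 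Here $\mathrm{prod}_p\in\Omega(\aaa^p;\aaa)$ is well defined with $\mathrm{prod}_1=\id_\aaa$, because $d(\mu_3^\aaa)=\mu_2^\aaa\circ_1\mu_2^\aaa-\mu_2^\aaa\circ_2\mu_2^\aaa$ while $\mu_3^\aaa\in I$, so $\mu_2^\aaa$ is strictly associative in $\Omega$. A tree with $b_j$ branches yields $\mu^\m$ of arity $b_j+1$, which matches the fact that $A_\infty^{\operatorname{Col}}$ has no unary module generator and no algebra unit, so that every tree has at least one branch and every branch at least one leaf.

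To see that the $\xi_F$ span $\Omega(\aaa^i,\m;\m)$, recall that the underlying graded operad of $A_\infty^{\operatorname{Col}}$ is free on the $\mu^\aaa_n$ and $\mu^\m_n$, so $A_\infty^{\operatorname{Col}}(\aaa^i,\m;\m)$ has a $k$-basis of planar trees with colour-consistent vertices labelled by generators. In this arity pattern the module-coloured edges form a single strand from the $\m$-leaf to the root, carrying the $\mu^\m$-vertices, whose $\aaa$-slots are fed by planar trees built solely from $\mu^\aaa$-vertices. In $\Omega = A_\infty^{\operatorname{Col}}/I$ every tree containing a vertex $\mu^\aaa_n$ with $n\geq 3$ dies, and by strict associativity each maximal binary $\aaa$-subtree on $p$ leaves is identified with $\mathrm{prod}_p$; what remains is exactly the combinatorics of a short forest, so every class in $\Omega(\aaa^i,\m;\m)$ is a $k$-combination of the $\xi_F$.

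The main obstacle is linear independence of $\{\xi_F\}$: one must show that, in this arity, $I$ imposes no relations beyond killing $\mu^\aaa_{\geq 3}$ and making $\mu^\aaa_2$ associative. I would argue by a rewriting/PBW computation on the tree basis: orient the rules $\mu^\aaa_n\mapsto 0$ for $n\geq 3$ and $\mu^\aaa_2\circ_2\mu^\aaa_2\mapsto\mu^\aaa_2\circ_1\mu^\aaa_2$, check termination via a weight on $\aaa$-subtrees, and check confluence --- the only nontrivial critical pair is the triple $\aaa$-product, resolved by the pentagon, the overlaps involving $\mu^\aaa_{\geq 3}\mapsto 0$ being immediate, and $d\mu^\aaa_n$ for $n\ge 4$ already lying in the operadic ideal generated by the $\mu^\aaa_j$ with $j\ge 3$. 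The normal forms are then precisely the $\xi_F$, so $\dim_k\Omega(\aaa^i,\m;\m)$ equals the number of short forests with $i$ leaves and the $\xi_F$ form a basis. (Equivalently one may invoke the forest basis for $\Omega$ already established in \cite{ACD}, the present statement being its degree-graded refinement.)

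Finally, for the degree: the $j$-th spine vertex is $\mu^\m$ of arity $b_j+1$, of degree $2-(b_j+1)=1-b_j$; the factors $\mathrm{prod}_p$ are assembled from the degree-$0$ operation $\mu^\aaa_2$, and the nesting is a linear chain plugged into degree-$0$ positions, contributing nothing further. Summing over the $t$ trees gives $\deg\xi_F=\sum_{j=1}^{t}(1-b_j)=t(F)-b(F)$, as claimed.
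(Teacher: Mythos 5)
Your construction of $\xi_F$ is exactly the paper's assignment $F \mapsto \mu(F^1)\circ\cdots\circ\mu(F^t)$ (one $\mu^\m$ per tree, its algebra slots filled by iterated binary products, well defined because $d(\mu^\aaa_3)\in I$ makes $\mu^\aaa_2$ strictly associative in $\Omega$), so the approach is essentially the same; the paper merely states the assignment and leaves the spanning, linear-independence and degree arguments implicit, all of which you supply correctly. One incidental remark: your arity convention $\mu^\m_{b_j+1}$ of degree $1-b_j$ is the one actually consistent with the claimed total degree $t(F)-b(F)$, whereas the paper's displayed formula writes $\mu^\m_{b_i}$ while feeding it $b_i+1$ inputs, an off-by-one in its own notation.
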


\begin{proof}
To the tree $F^i = (F^i_1, \ldots, F^i_{b_i})$ we assign the operation
$$ \mu(F^i) = \mu^\m_{b_i}\left((\mu^\aaa_2)^{F^i_1-1},\ldots, (\mu^\aaa_2)^{F^i_{b_1}-1},\operatorname{id}^\m\right)$$

The powers of $\mu^\aaa_2$ are well-defined since  $\mu^\aaa_2$ is associative. We then build the operation for the whole forest by composing $\mu(F^i)$ for all the trees in the same order as the trees appear in the forest.
\end{proof}

Under this isomorphism, the example forest above corresponds to the operation 
$$\mu^\m_2 \Bigg(\id^\aaa, \mu^\aaa_2, \mu^\m_3 \bigg(\id^\aaa,\id^\aaa,\id^\aaa, \mu^\m_1 \Big((\mu^\aaa_2)^3, \mu^\m_1 \big( \id^\aaa, \mu^\m_1 \big) \Big) \bigg) \Bigg) $$

The differential of $\Omega$ in this basis can be described in terms of two forest transformations, $U$ (for "unite") and $S$ (for "separate"). Let $F$ be a forest with a chosen pair of branches $B = (B_l,B_r)$ belonging to the same tree $T$.

\begin{enumerate}
    \item  $U(F,B)$ is the forest where $B_l$ and $B_r$ are replaced with the one branch that has leaves of both $B_l$ and $B_r$.
    \item $S(F,B)$ is the forest where $T$ is replaced by two separate trees, $T_l$ with branches of $T$ up to $B_l$ and $T_r$ with branches of $T$ starting from $B_r$.
\end{enumerate}

For example, consider the following forest with $B = (B_l,B_r)$ highlighted green:

\begin{center}
\begin{forest}
for tree = {grow'=90,circle, fill, minimum width = 4pt, inner sep = 0pt, s sep = 13pt}
[{},phantom
[{},name = one [[]] [[][]] ]
[{},name = two [[]] [[]] ]
[{},name = three [ {}, for tree = {fill = green, edge = {color = green}} [][]] [ {}, for tree = {fill = green, edge = {color = green}} [] [] []] [[]]  ]
]
\draw[black] (one) -- (two) -- (three);
\end{forest}
\end{center}

Then $U(F,B)$ and $S(F,B)$ are the two forests below.

\begin{center}
\begin{forest}
for tree = {grow'=90,circle, fill, minimum width = 4pt, inner sep = 0pt, s sep = 13pt}
[{},phantom
[{},name = one [[]] [[][]] ]
[{},name = two [[]] [[]] ]
[{},name = three [  [][] [] [] []] [[]]  ]
]
\draw[black] (one) -- (two) -- (three);
\end{forest}
\end{center}

\begin{center}
\begin{forest}
for tree = {grow'=90,circle, fill, minimum width = 4pt, inner sep = 0pt, s sep = 13pt}
[{},phantom
[{},name = one [[]] [[][]] ]
[{},name = two [[]] [[]] ]
[{},name = three [  [][]]]
[{},name = four [ [] [] []] [[]]  ]
]
\draw[black] (one) -- (two) -- (three) -- (four);
\end{forest}
\end{center}

\begin{prop}
Under the correspondence of Prop. \ref{isom}, the differential of $\Omega$ is this:
$$d(F) = \sum_{B = (B_l,B_r)} \pm U(F,B) + \sum_{B = (B_l,B_r)} \pm S(F,B)$$
where in both sums $B$ runs along the set of neighbouring branch pairs. The operadic composition is given by forest concatenation when composing two-colored operations or by leaf multiplication when composing with a one-colored operation.

\end{prop}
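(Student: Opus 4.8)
The plan is to push everything through the isomorphism of Proposition~\ref{isom} and to compute with the generators of $A_\infty^{\operatorname{Col}}$. Since $\Omega(\aaa^i,\m;\m)$ with its differential is by construction a quotient of the complex $A_\infty^{\operatorname{Col}}(\aaa^i,\m;\m)$, it is enough to compute the $A_\infty^{\operatorname{Col}}$-differential of the operation $\mu(F)$ attached to a basis forest $F$ and then pass to the quotient by $I$. Write $\mu(F)=\mu(F^1)\circ\dots\circ\mu(F^t)$, the $(i{+}1)$-st factor being plugged into the module slot of the $i$-th, with $\mu(F^i)=\mu^\m_{b_i}((\mu^\aaa_2)^{F^i_1-1},\dots,(\mu^\aaa_2)^{F^i_{b_i}-1},\id^\m)$, and apply the graded Leibniz rule. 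The first observation is a vanishing: $d(\mu^\aaa_2)=0$ already in $A_\infty^{\operatorname{Col}}$, and $\mu^\aaa_2$ is strictly associative in $\Omega$ (the associator is $\pm d(\mu^\aaa_3)$, which dies in the quotient), so every power $(\mu^\aaa_2)^k$ is a cycle in $\Omega$. Hence in $d(\mu(F))$ only the differentials of the $\mu^\m$-labelled vertices contribute, and $d(\mu(F))=\sum_{i=1}^{t}\pm\,\mu(F^1)\circ\cdots\circ d(\mu^\m_{b_i})(\cdots)\circ\cdots\circ\mu(F^t)$.

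Next I would analyse $d(\mu^\m_b)$ term by term after projecting to $\Omega$. Its ``algebra'' terms insert some $\mu^\aaa_s$ into a window of consecutive algebra inputs; modulo $I$ only $s=2$ survives, and inserting $\mu^\aaa_2$ into the window (branch $j$, branch $j{+}1$) of the $i$-th vertex produces — after using associativity of $\mu^\aaa_2$ to rewrite the result in the normal form of Proposition~\ref{isom} — exactly the operation attached to $U(F,(B_j,B_{j+1}))$, since the two branches merge into a single one carrying $F^i_j+F^i_{j+1}$ leaves. Its ``module'' terms insert a $\mu^\m_s$ into the module slot, factoring $\mu^\m_b$ as $\mu^\m_r\circ_{\mathrm{mod}}\mu^\m_s$ with $r+s=b$; distributing the $(\mu^\aaa_2)$-powers, the first $r$ branches stay with $\mu^\m_r$ and the last $s$ branches move onto the new factor, whose module slot inherits $\mu(F^{i+1})\circ\dots\circ\mu(F^t)$ — this is exactly the operation attached to $S(F,(B_r,B_{r+1}))$. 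Summing over $i$ and over the interior points $j$, resp. $r$, yields the two sums over neighbouring branch pairs; the boundary values are killed by the arity bounds in the definition of $A_\infty^{\operatorname{Col}}$, and both $U(F,B)$ and $S(F,B)$ are again short forests with the same number of leaves, hence honest basis elements.

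The signs are routine but want care: fix the Koszul convention coming from $\deg\mu^\m_n=2-n$, read off the sign of each term from the position of the window or split, and use $d^2=0$ (inherited from $A_\infty^{\operatorname{Col}}$) as a consistency check. For the composition statement, the operadic structure on $\Omega$ is likewise inherited. Composing two two-colored operations means substituting one into the unique module input of the other; that input is the module slot of the last tree of the outer forest (where $\id^\m$ sat), so the trees of the inner forest are simply appended — forest concatenation. Composing with a one-colored arity-$k$ operation, which in $\Omega$ is the unique iterated $\mu^\aaa_2$, means substituting it into an algebra leaf; by associativity this replaces that leaf by $k$ leaves on the same branch — leaf multiplication. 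Both are immediate from the action of operadic composition on generators.

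The step I expect to be the main obstacle is the combinatorial bookkeeping in the analysis of $d(\mu^\m_b)$: verifying that each term reduces to precisely one $U$- or $S$-move and conversely, with all boundary cases handled correctly (one-branch trees, the first and last branch, the interplay of a split with the module slot and with the trees following in the forest), and keeping the signs coherent throughout the rewriting into normal form.
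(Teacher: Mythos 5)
Your computation is correct and is the natural argument here; in fact the paper states this proposition without any proof, so your Leibniz-rule analysis of $d(\mu^\m_{b})$ modulo the ideal $I$ (insertions of $\mu^\aaa_2$ giving $U$, module-splittings giving $S$, with $d(\mu^\aaa_2)=0$ and associativity in $\Omega$ killing everything else) supplies exactly the verification the paper leaves to the reader. The remaining loose ends you flag — signs and boundary cases — are left at the same level of precision as the statement itself (which only asserts the terms up to $\pm$), so nothing essential is missing.
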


We now explain a similar description for $T$.

\begin{prop}
\label{isom2}
The basis of $T(\aaa^i,\m;\m)$ is given by triples $(F,T,G)$ where $F$ and $G$ are forests, $T$ is a tree, and the total number of leaves is $i$. 
\end{prop}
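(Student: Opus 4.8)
The plan is to extract an explicit monomial basis of the underlying graded $\Omega$-bimodule of $T$ and then repackage each monomial as a triple. First I would use that $M_\infty^{\operatorname{Col}}$ is quasi-free over $A_\infty^{\operatorname{Col}}$: forgetting the differential, it is the free bimodule on the generators $f^\aaa_n,f^\m_n$. A monomial basis of a free operadic bimodule is indexed by planar trees whose internal vertices carry arity- and colour-matching labels drawn from $\{\mu^\aaa_\bullet,\mu^\m_\bullet,f^\aaa_\bullet,f^\m_\bullet\}$, subject to the one condition that every root-to-leaf path meets exactly one $f$-labelled vertex; the $f$-vertices form a cut, with a (pure-$\mu$) $A_\infty^{\operatorname{Col}}$-tree above it and an $A_\infty^{\operatorname{Col}}$-forest hanging below each $f$-vertex. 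For a monomial of $M_\infty^{\operatorname{Col}}(\aaa^i,\m;\m)$ the output colour is $\m$, so the spine $P$ --- the path from the root to the $\m$-coloured input --- is $\m$-coloured throughout: every vertex of $P$ is a $\mu^\m$ except the unique $f$-vertex on $P$, which is a single $f^\m_k$; off $P$ hang $\aaa$-coloured subtrees, attached to the $\aaa$-inputs of the vertices of $P$ and of the cut.

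Next I would pass to $\Omega = A_\infty^{\operatorname{Col}}/I$. Since the DG-ideal $I$ contains $d\mu^\aaa_3$, the associator, the product $\mu^\aaa_2$ becomes strictly associative and $\mu^\aaa_k = 0$ for $k>2$; hence an $\aaa$-coloured operation of $\Omega$ is a comb $(\mu^\aaa_2)^{c-1}$, recorded by the number $c$ of its leaves, and an $\m$-coloured operation of $\Omega$ is a chain of $\mu^\m$'s carrying combs on its $\aaa$-inputs, that is, precisely a short forest (cf.\ Proposition~\ref{isom}). Therefore, in $M_\infty^{\operatorname{Col}}/I$, every $\aaa$-coloured subtree hanging below the cut is a comb, and the portion of $P$ strictly below $f^\m_k$, together with the combs on its $\aaa$-inputs, is a short forest.

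Then I would impose the further quotient defining $T$. For the differential to descend one must quotient by the differential-closed subbimodule generated by the $f^\aaa_i$ with $i>1$; this subbimodule contains $d f^\aaa_2 = f^\aaa_1\circ\mu^\aaa_2-\mu^\aaa_2\circ(f^\aaa_1\otimes f^\aaa_1)$, whereas $d f^\aaa_n$ for $n>2$ becomes identically zero once $\mu^\aaa_{>2}$ and $f^\aaa_{>1}$ vanish. So in $T$ the surviving $f^\aaa$-labels are all equal to $f^\aaa_1$, and the only new relation is that $f^\aaa_1$ be a strict homomorphism for $\mu^\aaa_2$. An $\aaa$-coloured subtree lying \emph{above} $f^\m_k$ must carry exactly one $f^\aaa_1$ on each of its paths, so it is a comb of combs glued through a layer of $f^\aaa_1$'s; the homomorphism relation collapses it to one comb threaded through $f^\aaa_1$'s, again recorded by a single number. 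Hence the portion of $P$ strictly above $f^\m_k$, with the $\aaa$-inputs of its $\mu^\m$-vertices, is once more a short forest.

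Finally I would read off the bijection: to a monomial of $T(\aaa^i,\m;\m)$ assign the triple $(F,T,G)$ in which $F$ is the short forest of $\mu^\m$-vertices of $P$ above $f^\m_k$ with their $\aaa$-inputs, $T$ is the depth-two planar tree consisting of $f^\m_k$ with its own $\aaa$-inputs, and $G$ is the short forest of $\mu^\m$-vertices of $P$ below $f^\m_k$ with their $\aaa$-inputs (allowing $F$ or $G$ to be empty and $T$ to be branchless in degenerate cases); the leaf counts of $F$, $T$ and $G$ add up to the number $i$ of $\aaa$-inputs, and conversely any such triple of total leaf count $i$ reconstructs the monomial uniquely. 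The step I expect to be the real obstacle is the third one: checking that the second quotient introduces \emph{nothing} beyond multiplicativity of $f^\aaa_1$ --- equivalently, that $T$ as a graded $\Omega$-bimodule is freely generated by $\{f^\aaa_1\}\cup\{f^\m_n\}$ modulo exactly that relation --- which comes down to careful bookkeeping of the differential on the generating set, or, more quickly, to matching with the forest description of $T$ already present in \cite{ACD}. The degree and differential in this basis, although not part of the statement, can then be extracted just as for $\Omega$.
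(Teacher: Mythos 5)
Your proposal is correct and follows essentially the same route as the paper: it lands on exactly the correspondence the paper writes down, with $F$ the post-map $\mu^\m$-spine with combs on its $\aaa$-inputs, $T$ the generator $f^\m_k$ with its combs, and $G$ the pre-map spine. The paper's proof merely exhibits the assignment from triples to operations and leaves the basis claim implicit, so your additional bookkeeping --- the monomial basis of the free bimodule, the observation that one must take the differential-closed subbimodule so that $df^\aaa_2$ imposes multiplicativity of $f^\aaa_1$, and the resulting normal form for the $\aaa$-coloured subtrees crossing the cut --- is a correct completion of the argument rather than a departure from it.
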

\begin{proof}
To the tree $F^i = (F^i_1, \ldots, F^i_{b_i})$ in the left forest, we assign the operation
$$ \mu(F^i) = \mu^\m_{b_i}\left((\mu^\aaa_2)^{F^i_1-1},\ldots, (\mu^\aaa_2)^{F^i_{b_i}-1},\operatorname{id}_\m \right)$$

To the middle tree $T = (T_1, \ldots, T_t)$, we assign the operation
$$ \mu(T) = f^\m_{t}\left((\mu^\aaa_2)^{T_1-1},\ldots, (\mu^\aaa_2)^{T_t-1},\operatorname{id}_\m \right)$$

To the tree $G^i = (G^i_1, \ldots, G^i_{c_i})$ in the right forest, we assign the operation
$$ \mu(G^i) = \mu^\m_{c_i}\left((\mu^\aaa_2)^{G^i_1-1},\ldots, (\mu^\aaa_2)^{G^i_{c_i}-1},\operatorname{id}_\m \right)$$

We then build the operation for the whole triple by composing $\mu(F^i)$, $\mu(T)$ and $\mu(G^i)$ in the same order as the trees appear in the triple.

\end{proof}

Informally, the right forest $G$ is what happens before we map, the middle tree $T$ is the map itself, and the left forest $F$ is what happens after we map. Below is an example triple corresponding to $\mu^\m_1 \left( \id_\aaa,\mu^\m_1 \left( \mu^\aaa_2,f_2 \left( \mu^\aaa_2,\id_\m \right) \right ) \right)$.

\[ \left ( \vcenter{\hbox{ \begin{forest}
for tree = {grow'=90,circle, fill, minimum width = 4pt, inner sep = 0pt, s sep = 13pt}
[{},phantom
[{},name = one [[]] ]
[{},name = two [[][]] ]
]
\draw[black] (one) -- (two);
\end{forest},
\begin{forest}
for tree = {grow'=90,circle, fill, minimum width = 4pt, inner sep = 0pt, s sep = 13pt}
[{},phantom
[  [[][]] ]
]
\end{forest}, 1  }} \right ) \]

To describe the differential, we need to modify the definition of the transformation $S$ in the case when it is applied to the branch pair in the middle tree, because two trees cannot both remain in the middle. Set $S_l( (F,T,G),B) = (F \circ T_l, T_r, G)$ and  $S_r( (F,T,G),B) = (F, T_l, T_r \circ G)$. Now for $B$ any neighbouring pair of branches is $(F,T,G)$, define
$$ S((F,T,G),B) = \begin{cases} (S(F,B), T, G) & B \subset F \\ S_l((F,T,G),B) + S_r((F,T,G),B)  & B \subset T \\ (F, T, S(G,B)) & B \subset G \end{cases} $$

\begin{prop}
Under the correspondence of Prop. \ref{isom2}, the differential of $T$ is this:
\begin{align*}
& d(F,T,G) = \\
& \sum_{B} \pm U((F,T,G),B) + \sum_{B} \pm S((F,T,G)B) + (F \circ T, 1, G) + (F, 1, T \circ G)
\end{align*}
where in both sums $B$ runs along the set of neighbouring branch pairs anywhere in the triple.  The operadic bimodule structure is given either by forest concatenation when composing with operations in $\Omega(\aaa^n,\m;\m)$ or by leaf multiplication when composing with operations in $\Omega(\aaa^n;\aaa)$.
\end{prop}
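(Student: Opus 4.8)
The strategy is to transport everything along the defining quotient. The composite $q\co M_\infty^{\operatorname{Col}} \twoheadrightarrow M_\infty^{\operatorname{Col}}/I \twoheadrightarrow T$ is by construction a morphism of DG operadic bimodules, so both the differential and the $\Omega$-bimodule structure on $T$ are inherited from $M_\infty^{\operatorname{Col}}$; together with the basis of Proposition~\ref{isom2} this turns the statement into a finite computation. Concretely, I would fix a basis triple $(F,T,G)$, take the explicit word $w(F,T,G)\in M_\infty^{\operatorname{Col}}$ written down in the proof of Proposition~\ref{isom2} — a single $f^\m_\bullet$ coming from the middle tree, with powers of $\mu^\aaa_2$ precomposed on its algebra legs, further composed on the output side and inside the module leg with the all-$\mu$ words $\mu(F)$ and $\mu(G)$ — apply the Leibniz rule using the three defining differential formulas for $\mu^\aaa_\bullet$, $\mu^\m_\bullet$, $f^\m_\bullet$, and then apply $q$, i.e.\ erase every summand containing a factor $\mu^\aaa_i$ with $i>2$ or $f^\aaa_i$ with $i>1$. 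What remains is to read off the surviving summands as forest operations. The identities $d(\mu^\aaa_2)=0$ and hence $d\big((\mu^\aaa_2)^{p}\big)=0$ are used throughout, so the precomposed $\mu^\aaa_2$-legs contribute nothing under $d$.

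For the outer word $\mu(F)$ and the inner word $\mu(G)$ the reading-off is already available: both are words in $\mu^\m_\bullet$ and $\mu^\aaa_2$, and differentiating them and discarding the killed summands reproduces verbatim the description of the differential of $\Omega$ given above — the $\mu^\aaa_2$-insertion terms of $d(\mu^\m_\bullet)$ produce the branch-merges $U$, the $\mu^\m_\bullet$-nesting terms produce the tree-separations $S$. Thus $d\mu(F)\circ\mu(T)\circ\mu(G)$ contributes exactly $\sum_{B\subset F}(\pm U\pm S)$ and $\mu(F)\circ\mu(T)\circ d\mu(G)$ exactly $\sum_{B\subset G}(\pm U\pm S)$, with the convention that separating a tree of $F$ (resp.\ $G$) just produces another tree of $F$ (resp.\ $G$), as in the stated case distinction for $S$. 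The bimodule structure is handled the same way: composing a triple on the output side with an operation of $\Omega(\aaa^n,\m;\m)$ is the left $A_\infty^{\operatorname{Col}}$-action, which after deleting killed generators is the concatenation of that forest onto the front of $F$; composing inside the $\m$-leg with an operation of $\Omega(\aaa^n,\m;\m)$ is the right action, concatenating onto $G$; and composing with an operation of $\Omega(\aaa^n;\aaa)$ — a bare word in $\mu^\aaa_2$ — is the right action at a single algebra leaf, i.e.\ leaf multiplication.

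The one genuinely new computation is the middle term $\mu(F)\circ d\mu(T)\circ\mu(G)$, i.e.\ the effect of $d(f^\m_\bullet)$ on $T$; I would push its three families through $q$. (i) The family $f^\m_\bullet(\id_\aaa^{\otimes r}\otimes\mu^\aaa_s\otimes\id_\aaa^{\otimes t}\otimes\id_\m)$ survives only for $s=2$, and a root-level insertion of $\mu^\aaa_2$ between two adjacent algebra legs merges their $\mu^\aaa_2$-columns — precisely $U((F,T,G),B)$ for $B\subset T$ — while any insertion internal to a single column is absorbed (it is accounted for by $d$ of that column, which vanishes). (ii) The split-then-map family $\mu^\m_\bullet(f^\aaa_{i_1}\otimes\cdots\otimes f^\m_{i_r})$ survives only when every $f^\aaa_{i_j}=f^\aaa_1=\id_\aaa$; the fresh $\mu^\m_\bullet$-vertex then sits on the output side and, absorbed into $F$, turns a prefix $T_l$ of the branches of $T$ into a new last tree of $F$ while the suffix $T_r$ stays as the middle tree — this is $S_l((F,T,G),B)=(F\circ T_l,T_r,G)$ for each internal branch-pair $B\subset T$ — and the finest such face gives the extreme term $(F\circ T,1,G)$ with $1$ the trivial middle tree. (iii) The map-then-split family $f^\m_\bullet(\id_\aaa^{\otimes r}\otimes\mu^\m_\bullet)$ dually feeds a suffix $T_r$ of the branches of $T$ into the $\m$-leg, hence into the front of $G$, giving $S_r((F,T,G),B)=(F,T_l,T_r\circ G)$, and the finest such face gives $(F,1,T\circ G)$. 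Summing (i)--(iii) and combining with the previous paragraph yields the asserted formula for $d(F,T,G)$.

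The step I expect to be the main obstacle is sign bookkeeping: matching the Koszul signs produced by Leibniz in $M_\infty^{\operatorname{Col}}$ (where $\mu^\m_n$, $f^\m_n$ carry degrees $2-n$, $1-n$) against the signs $\pm$ in the forest formulas, and choosing the orientation conventions for $U$, $S$, $S_l$, $S_r$ so that they are simultaneously compatible — this is exactly the place where one would also verify $d^2=0$ on the forest side, although that identity is not logically required once the isomorphism of $T$ with this quotient of $M_\infty^{\operatorname{Col}}$ is in place.
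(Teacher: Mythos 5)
The paper states this proposition without proof, treating it as a direct unwinding of the quotient $M_\infty^{\operatorname{Col}} \twoheadrightarrow T$ together with the word representatives from Prop.~\ref{isom2}, and your computation is exactly that unwinding: Leibniz plus the three families in $d(f^\m_\bullet)$, with the surviving $\mu^\aaa_2$-insertions giving $U$, the $\mu^\m(f^\aaa_1,\ldots,f^\m_\bullet)$ terms giving $S_l$ and the extremal $(F\circ T,1,G)$, and the $f^\m_\bullet(\ldots,\mu^\m_\bullet)$ terms giving $S_r$ and $(F,1,T\circ G)$, while $d\mu(F)$ and $d\mu(G)$ reproduce the $\Omega$-differential on the outer forests. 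The only cosmetic imprecision is writing $f^\aaa_1=\id_\aaa$ in $T$ (strictly, $f^\aaa_1$ is only sent to $\id_\aaa$ by the counit; the identification of the surviving summands with basis triples uses the same convention already implicit in Prop.~\ref{isom2}), and the unspecified signs you flag are likewise left as $\pm$ in the statement itself, so this is not a gap relative to what is being claimed.
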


\section{Associahedra and multiplihedra}
\subsection{Associahedra}
It is a well known fact that the DG operad $A_\infty$ is obtained by the functor of cellular chains from a CW-operad of {\em Stasheff associahedra} (see \cite{Sta} and \cite{Tam}).

\begin{defi}
An abstract polytope $\mathcal{K}(n)$ has faces corresponding to planar trees with $n$ leaves. The face $T$ is a subface of the face $T'$ if $T'$ can be obtained from $T$ by contracting inner edges. Viewed as an $\mathbb{N}$-sequence in the category of CW-complexes, $\mathcal{K}$ has an operadic structure given by tree grafting.
\end{defi}

\begin{prop}
\label{assoc}
$C_*(\mathcal{K}) = A_\infty$. Under this isomorphism, the $n$-corolla corresponds to $\mu_n$.
\end{prop}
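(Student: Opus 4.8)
The plan is to exhibit an explicit degreewise isomorphism $C_*(\mathcal{K}(n)) \cong A_\infty(n)$ for each $n$ and then check that it intertwines both the differentials and the operadic compositions. First I would recall that the $k$-dimensional faces of the associahedron $\mathcal{K}(n)$ are indexed by planar trees with $n$ leaves and exactly $n-2-k$ inner edges (equivalently, $k+1$ internal vertices of arity $\geq 2$ other than... counting so that the top cell, the $n$-corolla, has dimension $n-2$ and the vertices are binary trees). The cellular chain group $C_k(\mathcal{K}(n))$ is the free $\mathsf{k}$-module on these faces. On the operadic side, $A_\infty(n)$ in each homological degree has a basis given by formal compositions of the generators $\mu_2,\dots,\mu_n$ with total arity $n$; by iterating the operadic composition until no further decomposition is possible, such a basis element is the same datum as a planar tree with $n$ leaves whose internal vertices of arity $j$ are decorated by $\mu_j$. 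The degree bookkeeping matches: a vertex of arity $j$ contributes $2-j$, and summing over a tree with $n$ leaves gives total degree equal to (number of inner edges) $= n-2-(\dim)$... i.e. it reproduces exactly the grading on $C_*(\mathcal{K}(n))$ with the $n$-corolla $\mapsto \mu_n$ in top degree $n-2$. This gives the claimed bijection of bases, and in particular sends the $n$-corolla to $\mu_n$.

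Next I would verify that this bijection is a chain map. The boundary of a face of $\mathcal{K}(n)$ labelled by a tree $T$ is the signed sum over all ways of \emph{expanding} a single internal vertex of $T$ into two vertices joined by a new inner edge, i.e. the codimension-one faces of the cell $T$. Under the tree-decorated-by-$\mu$'s dictionary, expanding a vertex of arity $n'$ into a vertex of arity $i+1+k$ feeding into a vertex of arity $j$ (with $i+j+k=n'$) is precisely the term $\mu_{i+1+k}(\id^{\otimes i}\otimes\mu_j\otimes\id^{\otimes k})$ appearing in the formula $d(\mu_n)=\sum_{i+j+k=n}\mu_{i+1+k}(\id^{\otimes i}\otimes\mu_j\otimes\id^{\otimes k})$, applied at that vertex; summing over all vertices of $T$ and using the (co)Leibniz rule for the differential of a composite gives the full differential of the corresponding element of $A_\infty(n)$. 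The only real content is that the cellular boundary signs and the Koszul signs in $A_\infty$ can be made to agree; this is where I would be careful, choosing an orientation convention on the cells of $\mathcal{K}(n)$ (e.g. via an ordering of the inner edges) compatible with the standard sign rule for operadic composition, as is done in Loday's realization of the associahedron. Finally, compatibility with the operadic structure is essentially a tautology once the bases are identified: grafting of trees is simultaneously the CW-operad structure on $\mathcal{K}$ (by definition) and the operadic composition on $A_\infty$ (by definition of the free operad on the $\mu_n$'s modulo the differential), so the map is a map of operads.

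The main obstacle I expect is purely the sign/orientation matching in the chain-map step: getting a single coherent choice of cell orientations on all the $\mathcal{K}(n)$ so that the cellular differential literally equals the $A_\infty$ differential on the nose (not just up to an automorphism). Everything else—the bijection of bases, the degree count, and operadic compatibility—is bookkeeping that follows directly from the combinatorial descriptions of $\mathcal{K}$ and of $A_\infty$ recalled above. I would therefore structure the write-up as: (1) the basis bijection and degree check; (2) a fixed orientation convention and the verification that $\partial$ matches $d$ term by term; (3) the one-line remark that grafting is grafting, giving the operad map; and I would cite \cite{Sta}, \cite{Tam} (and Loday's construction) for the sign conventions rather than rederiving them.
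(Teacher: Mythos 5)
The paper states this proposition without proof, treating it as a classical fact and citing Stasheff and Tamari; your sketch is exactly the standard argument those references embody (faces of $\mathcal{K}(n)$ as planar trees matching the tree basis of the free operad on the $\mu_j$'s, codimension-one faces matching the terms of $d(\mu_n)$, grafting matching grafting), and it is correct in outline, including your identification of the orientation/sign conventions as the only point requiring real care. So your proposal is consistent with the paper's (implicit) approach; the only caveat is the usual one that degree conventions must be reconciled, since the $n$-corolla spans the top cell of dimension $n-2$ while $\mu_n$ is declared to have degree $2-n$.
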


$\mathcal{K}(1)$ and $\mathcal{K}(2)$ are points. The pictures below show the interval $\mathcal{K}(3)$ and the pentagon $\mathcal{K}(4)$, with faces labelled by planar trees.

\begin{center}
\begin{tikzpicture}[
  vertex/.style={circle,draw,minimum size=8mm,inner sep=0pt, scale = 1.5},
  edge/.style={circle,draw,minimum size=6mm,inner sep=0pt, scale = 1.5}
  ]
  
 \node[vertex] (v0) at (0,0) {\RS{{L{lr}}{Rr}}};
 \node[vertex] (v1) at (6,0) {\RS{{Ll}{R{rl}}}};
 
 \draw (v0) -- node[edge,above=1pt]{\RS{{Ll}{Ii}{Rr}}} (v1);
 
\end{tikzpicture}
\end{center}

\begin{center}
\begin{tikzpicture}[
  vertex/.style={circle,draw,minimum size=8mm,inner sep=0pt, scale = 1.5},
  edge/.style={circle,draw,minimum size=6mm,inner sep=0pt, scale = 1.5}
  ]
  \newcommand\R{3cm}
  \node[circle,draw,scale = 2] at (0,0) {\RS{{Ll}{Qq}{Zz}{Rr}}};
  
  \node[vertex](v0) at (0*72+36:\R) {\RS{{MMm}{S{Mm}S{ms}}}};
  \node[vertex](v1) at (1*72+36:\R) {\RS{{L{li}}{R{ir}}}};
  \node[vertex](v2) at (2*72+36:\R) {\RS{{SSs}{M{Ss}M{sm}}}};
  \node[vertex](v3) at (3*72+36:\R) {\RS{{L{Ll}{R{lr}}}{RRr}}};
  \node[vertex](v4) at (4*72+36:\R) {\RS{{R{Rr}{L{rl}}}{LLl}}};
  \draw (v0) edge node[edge,above=1pt,xshift=2pt]{\usebox\sba} (v1);
  \draw (v1) edge node[edge,left,yshift=4pt]{\usebox\sbb} (v2);
  \draw (v2) -- node[edge,left,yshift=-4pt]{\usebox\sbc} (v3);
  \draw (v3) -- node[edge,below=1pt,xshift=2pt]{\usebox\sbd} (v4);
  \draw (v4) -- node[edge,right=1pt]{\usebox\sbe} (v0);
\end{tikzpicture}
\end{center}

It is a straigtforward observation that the $(\aaa,\m)$-colored operad $A_\infty^{\operatorname{Col}}$ can also be obtained from associahedra via cellular chains. Precisely, let $\mathcal{K}^{\operatorname{Col}}$ be a colored CW-operad with \begin{align*}
& \mathcal{K}^{\operatorname{Col}}(\aaa^n;\aaa) = \mathcal{K}(n);\\
& \mathcal{K}^{\operatorname{Col}}(\aaa^{n-1},\m;\m) = \mathcal{K}(n);\\
& \emptyset \text{ elsewhere}.
\end{align*}
Then $C_*(\mathcal{K}^{\operatorname{Col}}) = A_\infty^{\operatorname{Col}}$, with the $n$-corolla of $ \mathcal{K}^{\operatorname{Col}}(\aaa^n;\aaa)$ corresponding to $\mu_n^\aaa$ and with the $n$-corolla of $\mathcal{K}^{\operatorname{Col}}(\aaa^{n-1},\m;\m)$ corresponding to $\mu_n^\m$. \\

\subsection{Multiplihedra} $M_\infty$, the operadic bimodule over $A_\infty$, is also obtained by the functor of cellular chains from polytopes $\mathcal{J}$ called {\em multiplihedra} that form  a CW-operadic bimodule over $\mathcal{K}$. According to \cite{For}, multiplihedra admit a description in terms of trees, similar to the description of associahedra.

\begin{defi}
A painted planar tree $T$ is a planar tree with a possibility of single-input vertices, and with a selected subtree $T_{\operatorname{painted}}$ such that
\begin{itemize}
    \item the root of $T$ belongs to $T_{\operatorname{painted}}$
    \item the leaves of $T$ do not belong to $T_{\operatorname{painted}}$
    \item every single-input vertex of $T$ is a leaf of $T_{\operatorname{painted}}$
    \item for every vertex of $T_{\operatorname{painted}}$ either all inputs are in $T_{\operatorname{painted}}$ or all inputs are not in $T_{\operatorname{painted}}$
\end{itemize}
\end{defi}

The picture below shows some examples of such painted trees.

\begin{center}
\begin{tikzpicture}
\node[scale = 2]{\usebox\rbo};
\node[right = 2cm, scale = 2]{\usebox\mbo};
\node[left = 2cm, scale = 2]{\usebox\lbo};
\end{tikzpicture}
\end{center}

\begin{defi}
For a painted tree $T$, the admissible contractions are: 
\begin{enumerate}
    \item contract an inner edge of $T$ that is unpainted. For example, $$ \RS{BB {LL{Ll}{Rr}} {RRRr}} \longrightarrow \RS{BB {LLl}{IIi}{RRr}}$$
    \item contract an edge that is inner to $T_{\operatorname{painted}}$. For example,
     $$ \RS{BB {AA{Al}{Cr}} {CCRr}} \longrightarrow \RS{BB {AAl}{BBi}{CCr}}$$
    \item contract a corolla of painted leaves. For example,
    $$ \RS{BB {AA{Al}{Cr}} {CCRr}} \longrightarrow \RS{BB {AA{Ll}{Rr}} {CCRr}}$$
\end{enumerate}
\end{defi}

\begin{defi}
An abstract polytope $\mathcal{J}(n)$ has faces corresponding to painted planar trees. The face $T$ is a subface of the face $T'$ if $T'$ can be obtained from $T$ by a sequence of admissible contractions. Operadic bimodule structure is again given by tree grafting. For left module structure, the formerly unpainted tree remains unpainted, and for right module structure, the formerly unpainted tree admits the maximal painting.
\end{defi}

Below are examples of left and right grafting:
$$ \RS{II {LLl} {RRr}} \circ_1 \RS{BB {AA{Al}{Cr}} {CCRr}} = \RS{BB {AA  {AA{Al}{Cr}} {CCRr} } {CCCCRr} }$$
$$\RS{BB {AA{Al}{Cr}} {CCRr}} \circ_{3} \RS{II {LLl} {RRr}} = \RS{BB {AAA{Al}{Cr}} {CCR{Ll} {Rr}}}$$

The picture below illustrates the interval $\mathcal{J}(2)$ and the hexagon $\mathcal{J}(3)$, with faces labelled by colored trees.

\begin{center}
\begin{tikzpicture}[
  vertex/.style={circle,draw,minimum size=8mm,inner sep=0pt, scale = 1.5},
  edge/.style={circle,draw,minimum size=6mm,inner sep=0pt, scale = 1.5}
  ]
  
 \node[vertex] (v0) at (0,0) {\RS{B{Al}{Cr}}};
 \node[vertex] (v1) at (6,0) {\RS{BI{l}{r}}};
 
 \draw (v0) -- node[edge,above=1pt]{\RS{B{l}{r}}} (v1);
 
\end{tikzpicture}
\end{center}

\begin{center}
\begin{tikzpicture}[
  vertex/.style={circle,draw,minimum size=10mm,inner sep=0pt, scale = 1.5},
  edge/.style={circle,draw,minimum size=6mm,inner sep=0pt, scale = 1.5}
  ]
  \newcommand\R{3cm}
  \node[circle,draw,scale = 2] at (0,0) {\RS{Blir}};
  
  \node[vertex](v0) at (0:\R) {\RS{BI{L{l}{r}}{Rr}}};
  \node[vertex](v1) at (60:\R) {\RS{B{AL{l}{r}}{CRr}}};
  \node[vertex](v2) at (120:\R) {\RS{B{A{Al}{Cr}}{CCr}}};
  \node[vertex](v3) at (180:\R) {\RS{B{AAl}{C{Al}{Cr}}}};
  \node[vertex](v4) at (240:\R) {\RS{B{ALl}{CR{lr}}}};
  \node[vertex](v5) at (300:\R) {\RS{BI{Ll}{R{lr}}}};
  
  \draw (v0) -- node[above right = 2pt, edge]{\RS{B{L{l}{r}}{Rr}}} (v1);
  \draw (v1) -- node[above = 2pt, edge]{\RS{B{A{l}{r}}{Cr}}} (v2);
  \draw (v2) -- node[above left = 2pt, edge] {\RS{B{Al}{Bi}{Cr}}} (v3);
  \draw (v3) -- node[below left = 2pt, edge] {\RS{B{Al}{C{lr}}}}(v4);
  \draw (v4) -- node[below = 2pt,edge]{\RS{B{Ll}{R{lr}}}} (v5);
  \draw (v5) -- node[below right = 2pt,edge]{\RS{BIlir}} (v0);
 
\end{tikzpicture}
\end{center}

Let $C_n$ denote the painted tree labelling the top-dimensional cell of $\mathcal{J}(n)$. For example, $C_3 = \RS{Blir}$.

\begin{prop} The isomorphism of Prop. \ref{assoc} extends to
$C_*(\mathcal{K},\mathcal{J}) = (A_\infty, M_\infty)$, where by $(A_\infty, M_\infty)$ we mean just the underlying pair. Under this isomorphism, the corolla $C_i$ corresponds to $f_i$.
\end{prop}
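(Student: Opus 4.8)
\emph{Proof strategy.} The plan is to promote the isomorphism of Prop.~\ref{assoc} using the monoidality of cellular chains, and then to compare the two $A_\infty$-bimodules freely, generator by generator.

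First I would record that the cellular-chains functor $C_*$ from CW-complexes to $\dgvect$ is \emph{strictly} monoidal: a cell of $X\times Y$ is a product of a cell of $X$ and a cell of $Y$, and the cellular boundary obeys the Leibniz rule, so $C_*(X\times Y)=C_*(X)\otimes C_*(Y)$ on the nose. Hence $C_*$ carries $\mathbb{N}$-sequences of CW-complexes (with $\odot$) to $\mathbb{N}$-sequences in $\dgvect$, takes monoids to DG-operads and bimodules to DG-operadic bimodules, and is compatible with all the structure maps because tree grafting is cellular. Applied to the CW-operadic bimodule $(\mathcal{K},\mathcal{J})$ this makes $C_*(\mathcal{J})$ an operadic bimodule over $C_*(\mathcal{K})$, and $C_*(\mathcal{K})=A_\infty$ by Prop.~\ref{assoc}. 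So the task reduces to identifying $C_*(\mathcal{J})$ with $M_\infty$ as an $A_\infty$-bimodule, normalised so that the top-cell corolla $C_n\in\mathcal{J}(n)$ is sent to $f_n$.

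Second, I would use the painted-tree description of \cite{For} to decompose every painted planar tree with $n$ leaves uniquely as an ordinary (unpainted) planar tree grafted, on its painted root-adjacent side, onto a single corolla $C_k$, onto whose $k$ unpainted leaves one grafts ordinary planar trees. This identifies the cells of $\mathcal{J}(n)$ with the monomials of $(A_\infty\odot E\odot A_\infty)(n)$, where $E$ is the $\mathbb{N}$-sequence with a single generator in each arity $m$, placed in degree $1-m$ (which is $-\dim\mathcal{J}(m)$, in accordance with the convention of Prop.~\ref{assoc}); and the identification respects gradings and the $\mathcal{K}$-bimodule action. Hence, as a graded $A_\infty$-bimodule, $C_*(\mathcal{J})$ is free on generators $C_n\leftrightarrow f_n$, which is exactly the underlying graded $A_\infty$-bimodule of $M_\infty$. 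Since both bimodules are free on matching generators, an isomorphism of the underlying graded objects commuting with the differentials on generators automatically commutes with everything; so it suffices to match $d$ on generators.

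Third, I would compute the cellular boundary of the top cell. Here $\partial C_n$ is the signed sum of the codimension-one faces of $\mathcal{J}(n)$, i.e.\ of the painted trees obtained from $C_n$ by the inverse of a single admissible contraction; a direct inspection (compare \cite{For}) shows these organise into exactly two families: (i) a corolla $C_{r+1+t}$ with an unpainted corolla $\mu_s$ grafted into one of its leaves ($r+s+t=n$, $s\ge 2$), which under the identification reads $f_{r+1+t}(\id^{\otimes r}\otimes\mu_s\otimes\id^{\otimes t})$; and (ii) a painted corolla $\mu_r$ ($r\ge 2$) carrying corollas $C_{i_1},\dots,C_{i_r}$ on its leaves ($i_1+\dots+i_r=n$), which reads $\mu_r(f_{i_1}\otimes\dots\otimes f_{i_r})$. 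A dimension count confirms there are no further families, and these are precisely the two sums in the formula for $d(f_n)$. The last step, which I expect to be the only genuinely delicate point, is the bookkeeping of signs: one fixes orientations of the multiplihedra compatibly with the operadic bimodule action and with the orientations of the associahedra used in Prop.~\ref{assoc}, and checks that the incidence numbers in $\partial C_n$ are exactly the signs implicit in $d(f_n)$ — routine but tedious, just as in the passage from associahedra to $A_\infty$. Granting that, the induced map $C_*(\mathcal{J})\to M_\infty$ is an isomorphism of DG-bimodules over $A_\infty$ extending Prop.~\ref{assoc}, with $C_i\mapsto f_i$.
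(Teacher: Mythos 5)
The paper does not actually prove this proposition --- it is stated as a classical fact (going back to \cite{Sta} and \cite{For}), so there is no in-paper argument to compare against. Your outline is essentially the standard proof and is sound: strict monoidality of $C_*$ turns the CW-operadic bimodule $(\mathcal{K},\mathcal{J})$ into a DG-operadic bimodule over $C_*(\mathcal{K})=A_\infty$, the painted-tree stratification exhibits $C_*(\mathcal{J})$ as the free graded $A_\infty$-bimodule on the corollas $C_n$ in the right degrees, and the codimension-one faces of the top cell reproduce exactly the two sums in $d(f_n)$; since both sides are free on matching generators, checking $d$ on generators suffices, and deferring the orientation/sign bookkeeping is acceptable for a statement of this kind.

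One correction is needed in your second step. A general painted tree does \emph{not} decompose as (unpainted tree) $\circ$ (one corolla $C_k$) $\circ$ (unpainted trees): the painted subtree $T_{\mathrm{painted}}$ has in general several leaves (transition vertices), and each of them carries its own corolla $C_{m_1},\dots,C_{m_r}$, with unpainted forests grafted above each. For instance the vertex of $\mathcal{J}(3)$ reading $(f(a)f(b))f(c)$ contains three $f_1$-corollas, not one. The correct decomposition is (element of $A_\infty(r)$, from the painted subtree) $\circ$ $(C_{m_1}\otimes\dots\otimes C_{m_r})$ $\circ$ (unpainted forests), and this --- not the single-corolla picture --- is what matches the monomials of $(A_\infty\odot E\odot A_\infty)(n)$ that you correctly write down. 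So your formula is right while the sentence describing it is not. Your boundary computation in the third step already uses the correct picture (family (ii) has $r$ corollas $C_{i_1},\dots,C_{i_r}$ sitting on a painted $\mu_r$), so nothing downstream breaks once the decomposition is stated properly.
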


\begin{rem}
$(\mathcal{K},\mathcal{J})$ does not form an CW-operadic pair because the map $c \co  C_*(\mathcal{J}) \to C_*(\mathcal{J}) \otimes_{C_*(\mathcal{K})} C_*(\mathcal{J})$ involves sums, and one cannot add maps of CW-complexes. In general, the notion of operadic pairs doesn't seem to be well adapted for non-additive categories like $Top$. However, it is often useful to realize the underlying pair of a DG-operadic pair as cellular chains on a CW-operad with a CW-bimodule.
\end{rem}

Similarly to the case of associahedra, we observe that the $(\aaa,\m)$-colored bimodule $M_\infty^{\operatorname{Col}}$ over $A_\infty^{\operatorname{Col}}$ can also be obtained from multiplihedra via cellular chains. Precisely, let $\mathcal{J}^{\operatorname{Col}}$ be a colored CW-sequence with \begin{align*}
& \mathcal{J}^{\operatorname{Col}}(\aaa^n;\aaa) = \mathcal{J}(n);\\
& \mathcal{J}^{\operatorname{Col}}(\aaa^{n-1},\m;\m) = \mathcal{J}(n);\\
& \emptyset \text{ elsewhere}.
\end{align*}

\subsection{Contraction problem}
In the category $\dgvect$ there is a projection of operadic pairs $(A_\infty^{\operatorname{Col}},M_\infty^{\operatorname{Col}}) \to (\Omega,T)$. On the polyhedral side, this should correspond to some contraction of associahedra and multiplihedra.  \\

The picture below illustrates how a pentagon $\mathcal{K}(4)$ contracts to a square $I^2$, if we remove the non-associativity of the algebra. For readability we label vertices not with binary trees but with expressions in 4 letters.

\begin{center}
\begin{tikzpicture}[
  vertex/.style={minimum size=8mm,inner sep=0pt}]
  
  \newcommand\R{2}
  
  \node[vertex](v0) at (0*72+36:\R) {a(b(cm))};
  \node[vertex](v1) at (1*72+36:\R) {(ab)(cm)};
  \node[vertex](v2) at (2*72+36:\R) {((ab)c)m};
  \node[vertex](v3) at (3*72+36:\R) {(a(bc))m};
  \node[vertex](v4) at (4*72+36:\R) {a((bc)m))};
  
  \draw (v3) -- (v4) -- (v0) -- (v1) -- (v2);
  \draw[ultra thick, red] (v2) -- (v3);
\end{tikzpicture}
\end{center}

The polyhedral contraction behind $A_\infty^{\operatorname{Col}} \to \Omega$ was computed, albeit in a different language, in \cite{ACD}.

\begin{prop}
\label{cubes}
$\Omega(\aaa^n,\m;\m) \simeq C_*(I^{n-1})$, and the projection  $A_\infty^{\operatorname{Col}} \to \Omega$ comes from a projection of associahedra to cubes.
\end{prop}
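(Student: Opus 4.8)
The plan is to identify a convenient basis for $\Omega(\aaa^n,\m;\m)$ coming from Prop.~\ref{isom}, show it matches a standard cellular basis of the $(n-1)$-cube, and check that the differentials agree. By Prop.~\ref{isom}, a basis of $\Omega(\aaa^n,\m;\m)$ is given by short forests $F$ with $l(F)=n$, graded by $t(F)-b(F)$. First I would reindex these forests by the data that actually matters for the cube. A short forest with $n$ leaves is determined by: (i) a composition of $n$ into the leaf-counts of the branches, read left to right, and (ii) a choice, between each pair of consecutive branches, of whether those branches sit in the same tree or in different trees (i.e. whether the ``ground'' continues or breaks). There are $n-1$ branches exactly when every leaf sits alone — but in general the number of branches varies. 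The key observation is that a short forest with $l(F)=n$ is equivalent to the following cube datum: take the standard $(n-1)$-cube $I^{n-1}$ with coordinates indexed by the $n-1$ ``gaps'' between consecutive leaves; a face of $I^{n-1}$ assigns to each gap one of the symbols $\{0,\ \ast,\ 1\}$ ($0$/$1$ for the two endpoints of that coordinate, $\ast$ for the open interval). I would set up the bijection so that ``$\ast$ in a gap'' means the two adjacent leaves lie on the same branch, ``$0$'' means they lie on adjacent branches of the same tree, and ``$1$'' means they lie in different trees. One checks that $b(F)-1$ equals the number of non-$\ast$ gaps and $t(F)-1$ equals the number of $1$-gaps, so the degree $t(F)-b(F)$ equals (number of $\ast$-gaps) $-(n-1)$, which up to the shift $-(n-1)$ is exactly the dimension of the corresponding cube face. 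Hence the bijection is degree-preserving after a global shift, giving the graded isomorphism $\Omega(\aaa^n,\m;\m)\simeq C_*(I^{n-1})$.

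Next I would match the differentials. By the description of $d_\Omega$ in terms of the transformations $U$ and $S$ on neighbouring branch pairs, the differential of a forest $F$ is a signed sum over internal neighbouring-branch sites; applying $U$ at a site merges two adjacent branches (turning a ``$0$''-gap into a ``$\ast$''-gap, raising dimension by one), and applying $S$ at a site splits the tree there (turning a ``$0$''-gap into a ``$1$''-gap, which is the other codimension-one face in that coordinate direction). Thus each neighbouring-branch site that is currently a $0$-gap contributes precisely the two codimension-one faces $\{x_i=0 \text{ replaced by the interior}\}$... more precisely: in the cube picture, a face with some coordinates set to $\ast$ (open) and the rest to $0$ or $1$ has cellular boundary obtained by, for each $\ast$-coordinate, sending it to $0$ or to $1$. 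Reading this backwards, the non-$\ast$ coordinates of $F$ are exactly the sites where $d_\Omega$ acts, and $U$ produces the ``set this gap to $\ast$, keeping everything else'' face while... — I would instead argue in the standard direction: a forest $F$ of degree $d$ (so $d+n-1$ gaps are $\ast$) has cellular boundary summing over its $d+n-1$ open coordinates, each contracted to $0$ or to $1$; contracting gap $i$ to $0$ is the forest where the two branches formerly merged at gap $i$ become adjacent branches of one tree, contracting to $1$ is where they become the last branch of one tree and first of the next. These are exactly the terms of $U$ and $S$ applied at the appropriate sites of the boundary forests, so $d_\Omega$ and the cellular differential coincide. The signs match by the standard orientation conventions for cubes once one fixes the ordering of gaps left-to-right; I would verify this on $n=2,3$ and invoke acyclicity/uniqueness of signs, or simply track Koszul signs through Prop.~\ref{isom}.

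Finally, for the operadic statement: the projection $A_\infty^{\operatorname{Col}}\to\Omega$ kills $\mu^\aaa_i$ for $i>2$, and on associahedra this is precisely the contraction collapsing every $\mathcal{K}(i)$, $i>2$, appearing as an $\aaa$-colored factor; the illustrated pentagon-to-square example is the arity-$4$ instance. I would phrase the polyhedral map as a cellular projection $\mathcal{K}^{\operatorname{Col}}(\aaa^n,\m;\m)=\mathcal{K}(n+1)\to I^{n-1}$ (or rather the induced map on the relevant associahedra), defined combinatorially on trees by the rule: a binary (planar) tree on the $\aaa$-leaves together with its module leaf determines, for each gap, the relative position of the two leaves with respect to the ``spine'' to the module slot, and this position is recorded as $0$, $\ast$, or $1$; contracting an inner edge corresponds to... — more simply, I would define the map on faces by sending a tree to the forest obtained by collapsing all maximal $\aaa$-only subtrees to single branches and reading off which ``module-ward'' edges were originally present, then check functoriality under operadic composition (tree grafting becomes forest concatenation / leaf multiplication, matching the two cases in the last Proposition of Section~2). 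Compatibility of this cellular map with $C_*$ reproduces $A_\infty^{\operatorname{Col}}\to\Omega$ on the nose because both are determined by where the corollas go.

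\medskip

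\textbf{Main obstacle.} The genuinely fiddly part is not the bijection of bases but pinning down \emph{signs}: matching the Koszul signs in Prop.~\ref{isom}'s assignment $F\mapsto\mu(F)$ and in the $U/S$ formula for $d_\Omega$ against the standard $\pm$ conventions for the cellular boundary of $I^{n-1}$ (which depend on a chosen orientation of each edge and an ordering of coordinates). I expect to handle this by fixing the left-to-right ordering of gaps as the coordinate order, orienting each edge $0\to 1$, and then checking the sign rule is forced by $d^2=0$ together with the base cases $n=2$ (interval) and $n=3$ (square), rather than by a direct global computation.
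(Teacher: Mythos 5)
Your overall strategy is the same as the paper's: index the coordinates of $I^{n-1}$ by the $n-1$ gaps between consecutive leaves of the forest and record in each gap one of three symbols according to whether the two leaves share a branch, share a tree but not a branch, or lie in different trees. However, your dictionary is the wrong permutation of the three symbols, and the error propagates. You assign the open interval $\ast$ to ``same branch''; the correct assignment (the one in the paper) is $\{0\}$ for ``same branch'', $[0,1]$ for ``different branches of the same tree'', and $\{1\}$ for ``different trees''. Your own bookkeeping already shows this: from $\#(\text{non-}\ast)=b(F)-1$ and $\#(1)=t(F)-1$ one gets $t(F)-b(F)=\#(1)+\#(\ast)-(n-1)=-\#(0)$, not $\#(\ast)-(n-1)$ (you dropped the $\#(1)$ term). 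So under your convention the degree equals minus the number of $0$-gaps, not, up to shift, the number of $\ast$-gaps. Concretely, for $n=3$ the forest of three singleton trees and the forest consisting of one branch with three leaves both have degree $0$, but your rule sends them to faces of $I^2$ of dimensions $0$ and $2$ respectively; the map is not even graded.

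The differential comparison inherits the problem. With the correct dictionary everything is immediate: at each gap carrying $[0,1]$ (two adjacent branches of one tree), the move $U$ merges those branches and sets that coordinate to $\{0\}$, while $S$ splits the tree there and sets it to $\{1\}$ --- exactly the two codimension-one faces of the cube in that coordinate direction. With your dictionary the summands of $d_\Omega$ do not correspond to specializing $\ast$-coordinates, and the operations you invoke when you ``argue in the standard direction'' (splitting a single branch into two) are inverse to $U$ and are not summands of $d(F)$ at all, so the asserted match with the cellular boundary fails as written. The repair is only a relabelling of your three symbols, after which your argument essentially becomes the paper's proof (the paper records only the face bijection and leaves the then-evident compatibility with $U$ and $S$ implicit). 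Your remarks on signs and on the induced projection of associahedra are reasonable in spirit and no less precise than what the paper itself offers, but they cannot be assessed until the underlying dictionary is corrected.
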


\begin{proof}
For a cube $I^{n-1}$, every face can be written as a word in letters $a$, $b$ and $c$, where $a$
 is interpreted as $\{0\}$, $b$ is interpreted as $[0,1]$, $c$
 is interpreted as $\{1\}$, and the word is interpreted as their product. For example, for the square the top-dimensional cell is $bb$, the initial vertex is $aa$, and the right side is $cb$. Now, having a short forest, you form the word by setting its $i$th letter equal to 
 \begin{itemize}
     \item $a$, if the leaves with numbers $i$ and $i+1$ belong to the same branch 
     \item $b$, if the leaves with numbers $i$ and $i+1$ belong to different branches of the same tree
     \item $c$, if the leaves with numbers $i$ and $i+1$ belong to different trees.
 \end{itemize}
 \end{proof}

Note that the above isomorphisms actually arrange the cubes into a CW-operad. \\

The corresponding contraction of multiplihedra was not previously known, and its computation is the goal of the current paper. The picture below illustrates the two-dimensional case, where a hexagon contracts to a pentagon. Warning: this pentagon is not an associahedron, but actually a freehedron.

\begin{center}
\begin{tikzpicture}[
  vertex/.style={minimum size=8mm,inner sep=0pt}]

  \newcommand\R{2.5cm}

  \node[vertex](v0) at (0:\R) {f(a)(f(b)f(c))};
  \node[vertex](v1) at (60:\R) {(f(a)f(b))f(m)};
  \node[vertex](v2) at (120:\R) {f(ab)f(m)};
  \node[vertex](v3) at (180:\R) {f((ab)m)};
  \node[vertex](v4) at (240:\R) {f(a(bm))};
  \node[vertex](v5) at (300:\R) {f(a)f(bc)};
  
  \draw (v0) -- (v1) -- (v2) -- (v3) -- (v4) -- (v5) -- (v0);
  \draw[ultra thick, red] (v1) -- (v2);
\end{tikzpicture}
\end{center}

\section{Freehedra}
In this section I present freehedra directly following \cite{San} and \cite{RS}. Consequently I do not include any proofs, but instead include a lot of details and pictures. There are three definitions: as truncations of simplices, as subdivisions of cubes, and a purely combinatorial one. The first definition is not used in the main arguments of this paper, so the reader can safely skip it.

\subsection{Freehedra as truncations of simplices}
The first way to obtain freehedra is to cook them from simplices by applying two sequences of truncations. \\

Consider the simplex $\Delta^n$ in your favourite embedding to $\mathbb{R}^n$. We now define the first sequence of truncations. Let the original vertices be labelled $0$, $1$, $\ldots$, $n$. After each truncation, some new vertices are cut from edges by the truncating hyperplane; the vertex cut from the edge $a \to b$ is denoted $(ab)$. \\

\begin{enumerate}
    \item Let $Q_0$ be a hyperplane that separates $0$ from the other vertices. Remove everything connected to $0$. The resulting object is a simplicial prism. Its first simplicial face $S_1$ has vertices $(01), \ldots, (0n)$, and the second simplicial face $S_2$ has vertices $1, \ldots, n$.
    \item The second hyperplane is like $Q_0$ but for the $(n-1)$-simplices $S_1$ and $S_2$ simultaneously. It separates $(01)$ and $1$ from the other vertices. Denote it by $Q_1$ and remove everything connected to $(01)$ and $1$. \\
\end{enumerate}

To define all the truncations inductively, denote by $L(k)$ the set of vertices that $Q_k$ separates from the rest. We see that $L(0) = \{0 \}$ and $L(1) = \{(01), 1 \}$. Now, having an expression for a vertex $v \in L(i-1)$, let $l_i(v)$ be the same expression with $i-1$ replaced by $i$. For example, $l_2((01)) = (02)$. Now $L(i)$ is defined to consist of vertices $l_i(v)$ and $(vl_i(v))$ for all $v \in L(i-1)$. This defines $Q(i)$, and we proceed to the next step by removing everything at the side of $L(i)$. The final truncation is by $Q_{n-2}$. We leave it to the interested reader to verify that this sequence of truncations is well-defined. \\

The second sequence is the same but starting at $n$ instead of $0$. Denote the hyperplanes by $P_0$, $\ldots$, $P_{n-2}$.\\

The pictures below show $\mathcal{F}_2$ and $\mathcal{F}_3$ cut out of a triangle and a tetrahedron respectively. Note that applying only one of the two truncation sequences yields cubes.

\begin{center}
\begin{tikzpicture}
\draw[thin] (1.5,1.5) -- (3,3) -- (4.5,1.5);
\draw[densely dashdotted] (1.5,1.5) -- (2,0);
\draw[densely dashdotted] (4.5,1.5) -- (4,0);
\draw[dotted] (0,0) -- (1.5,1.5);
\draw[dotted] (0,0) -- (2,0);
\draw[thin] (2,0) -- (4,0);
\draw[dotted] (4,0) -- (6,0);
\draw[dotted] (6,0) -- (4.5,1.5);

\filldraw[black] (1.5,1.5) circle (1.5pt);
\filldraw[black] (3,3) circle (1.5pt);
\filldraw[black] (4.5,1.5) circle (1.5pt);
\filldraw[black] (2,0) circle (1.5pt);
\filldraw[black] (4,0) circle (1.5pt);

\fill[gray,opacity = 0.3] (2,0) -- (1.5,1.5) -- (3,3) -- (4.5,1.5) -- (4,0) -- cycle;
\end{tikzpicture}
\end{center}
\vskip 1cm
\begin{center}
\begin{tikzpicture}

\draw[dotted] (0,0) -- (2,3);
\draw[dotted] (0,0) -- (2,1.2);
\draw[dotted] (0,0) -- (2,0);

\draw[dotted] (1.23,1.17) -- (1,1.5) -- (1.64,1.31);
\draw[dotted] (2.32,2.81) -- (2,3) -- (2.81,3);

\draw[densely dashdotted] (2,0) -- (1.23,1.17) -- (1.64,1.31) --(2,1.2) -- cycle;

\draw[densely dashdotted] (1.23,1.17) -- (2.32,2.81) -- (2.81,3) -- (1.64,1.31);

\filldraw[black] (1.23,1.17) circle (1.5pt);
\filldraw[black] (2,0) circle (1.5pt);
\filldraw[black]  (1.64,1.31) circle (1.5pt);
\filldraw[black]  (2.81,3) circle (1.5pt);
\filldraw[black] (2.32,2.81) circle (1.5pt);
\filldraw[black]  (2,1.2) circle (1.5pt);

\draw[dotted] (7-0,0) -- (7-2,3);
\draw[dotted] (7-0,0) -- (7-2,1.2);
\draw[dotted] (7-0,0) -- (7-2,0);

\draw[dotted] (7-1.23,1.17) -- (7-1,1.5) -- (7-1.64,1.31);
\draw[dotted] (7-2.32,2.81) -- (7-2,3) -- (7-2.81,3);

\draw[densely dashdotted] (7-2,0) -- (7-1.23,1.17) -- (7-1.64,1.31) --(7-2,1.2) -- cycle;

\draw[densely dashdotted] (7-1.23,1.17) -- (7-2.32,2.81) -- (7-2.81,3) -- (7-1.64,1.31);

\filldraw[black] (7-1.23,1.17) circle (1.5pt);
\filldraw[black] (7-2,0) circle (1.5pt);
\filldraw[black]  (7-1.64,1.31) circle (1.5pt);
\filldraw[black]  (7-2.81,3) circle (1.5pt);
\filldraw[black] (7-2.32,2.81) circle (1.5pt);
\filldraw[black]  (7-2,1.2) circle (1.5pt);

\draw[thin] (2,0) -- (5,0);
\draw[thin] (2,1.2) -- (7-2.32,2.81);
\draw[thin] (7-2.81,3) -- (2.81,3);
\draw[thin] (2.32,2.81) --  (5,1.2);

\fill[gray,opacity = 0.6] (2,0) -- (2,1.2) -- (7-2.32,2.81) -- (7-1.23,1.17) -- (5,0) -- cycle;

\fill[gray,opacity = 0.4] (2,0) -- (2,1.2) -- (1.64,1.31) -- (1.23,1.17) -- cycle;

\fill[gray,opacity = 0.1] (1.64,1.31) -- (1.23,1.17) -- (2.32,2.81) -- (2.81,3) -- cycle;

\fill[gray,opacity = 0.3] (2,1.2) -- (1.64,1.31) -- (2.81,3) -- (7 -2.81,3) -- (7-2.32,2.81) -- cycle;

\end{tikzpicture}    
\end{center}

\begin{prop}
Freehedra have two natural projections onto cubes and one natural projection onto simplices.
\end{prop}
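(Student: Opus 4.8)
The plan is to realize all three maps as instances of one elementary construction: undoing a vertex truncation. First I would record the following. Let $P'$ be the truncation of a polytope $P$ at a vertex $v$, i.e. $P' = P \cap \{\ell \ge 0\}$ for an affine function $\ell$ with $\ell(v) < 0 < \ell(w)$ for every other vertex $w$; write $F = P \cap \{\ell = 0\}$ for the new facet and $P_v = P \cap \{\ell \le 0\}$ for the shaved-off cap, which is the cone on $F$ with apex $v$. I claim there is a cellular surjection $\pi_v \colon P' \to P$: take a collar $F \times [0,1] \subset P'$ of $F = F\times\{1\}$, let $\pi_v$ be the identity off the collar, and on the collar let $\pi_v(x,t) = (1-t)v + tx$ in the cone coordinates of $P_v$. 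Then $\pi_v$ sends $F$ to $v$, the truncated copy of every face $G \ni v$ of $P$ onto $G$, and every face of $P$ missing $v$ to itself; in particular it never raises the dimension of a face, hence is cellular and induces a chain map $C_*(P')\to C_*(P)$. Combinatorially $\pi_v$ is the order-preserving surjection of face posets that absorbs the faces created by the cut into the star of $v$.

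Then I would iterate. By the construction of Section~4, $\mathcal{F}_n = \Delta^n \cap \bigcap_k Q_k^+ \cap \bigcap_k P_k^+$, and --- this being exactly the well-definedness recorded in \cite{San,RS} --- the result is independent of the order of the cuts, and each cut is, when performed, a genuine (shallow) vertex truncation. Since a single one of the two families already turns $\Delta^n$ into an $n$-cube, $\mathcal{F}_n$ is obtained from the cube $C_Q := \Delta^n \cap \bigcap_k Q_k^+$ by carrying out the $P$-truncations, from the cube $C_P := \Delta^n \cap \bigcap_k P_k^+$ by carrying out the $Q$-truncations, and from $\Delta^n$ by carrying out all of them. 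Composing the maps $\pi_v$ in the appropriate order produces cellular surjections
\[
\mathcal{F}_n \twoheadrightarrow C_Q \cong I^n, \qquad \mathcal{F}_n \twoheadrightarrow C_P \cong I^n, \qquad \mathcal{F}_n \twoheadrightarrow \Delta^n ,
\]
the first two being exchanged by the symmetry $0 \leftrightarrow n$ of the construction. Under the second description of freehedra (as subdivisions of cubes) these cube projections are the support maps, sending each cell of $\mathcal{F}_n$ to the minimal cell of $I^n$ containing it.

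The main obstacle is bookkeeping rather than geometry. One must check that the collars chosen at the successive truncations can be taken compatibly --- or, more cleanly, give a closed formula for the composite $\pi$ directly on the face poset of $\mathcal{F}_n$ in terms of the combinatorial labels of Section~4 --- and one must invoke the shallowness hypotheses that keep each $P_k$ (resp. $Q_k$) a vertex truncation of the already-truncated cube, which is precisely the point ``left to the interested reader'' after the definition of the $Q_k$ and $P_k$; I would quote it from \cite{San,RS} or re-derive it from their explicit coordinates. Once the face-poset description is made explicit, the three maps are manifestly cellular surjections and the proposition follows; it is also this combinatorial description that one expects to match, on the algebra color, the letter-erasing projection $A_\infty^{\operatorname{Col}}\to\Omega$ of Proposition~\ref{cubes}.
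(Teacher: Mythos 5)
Your proposal is essentially the paper's own proof — the paper disposes of this proposition in one line, ``all three projections are obtained by de-truncation,'' and you have simply made that de-truncation map explicit and iterated it over the two families of cuts. One small correction: for $k\ge 1$ the hyperplane $Q_k$ (resp.\ $P_k$) separates the set $L(k)$ of $2^k$ vertices, which spans a positive-dimensional face of the partially truncated polytope, so these are face truncations rather than vertex truncations; your collapse map must therefore send the new facet onto that face instead of onto a single apex $v$, which changes nothing else in the argument.
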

\begin{proof}
All the three projections are obtained by de-truncation. 
\end{proof}

\subsection{Freehedra as subdivisions of cubes}
The second definition of freehedra is inductive. According to it, each freehedron $\mathcal{F}_n$ is a certain subdivision of $\mathcal{F}_{n-1}\times [0,1]$, thus all the freehedra arise as drawn on cubes. \\

We will first present a simplified version of this definition. At each step, the freehedron $\mathcal{F}_n$ will have a distinguished hyperface face $X_n$. These distinguished faces are only needed for user-friendliness; in the full version of the  definition, at  each step Saneblidze keeps track of labels for all hyperfaces.

\begin{defi}
Let $\mathcal{F}_0$ be the point, and let $\mathcal{F}_1$ be the interval $[0,1]$ with distinguished vertex $X_1$ = $1$. Assume $\mathcal{F}_{n-1}$ and its distinguihed face $X_{n-1}$ are defined. Consider the polyhedron $F_{n-1} \times [0,1]$, and split its hyperface $X_{n-1} \times [0,1]$ vertically into $X_{n-1} \times [0,\frac{1}{2}]$ and $X_{n-1} \times [\frac{1}{2},1]$. This is $\mathcal{F}_n$. Set $X_n = X_{n-1} \times [\frac{1}{2},1]$.
\end{defi}

The picture below illustrates freehedra in dimensions 1, 2 and 3. Distinguished hyperfaces are highlighted red.

\begin{center}
\begin{tikzpicture}
\draw[thin] (0,0) -- (3,0);
\filldraw[black] (0,0) circle (1.5pt);
\filldraw[red] (3,0) circle (1.5pt);

\draw[thin] (3+4,0) -- (0+4,0) -- (0+4,-3) -- (3+4,-3) -- (3+4,-1.5);
\draw[red] (3+4,-1.5) -- (3+4,-0);
\filldraw[black] (0+4,0) circle (1.5pt);
\filldraw[black] (0+4,-3) circle (1.5pt);
\filldraw[red] (3+4,0) circle (1.5pt);
\filldraw[red] (3+4,-1.5) circle (1.5pt);
\filldraw[black] (3+4,-3) circle (1.5pt);

\fill[gray, opacity = 0.3] (0+4,0) -- (3+4,0) -- (3+4,-3) -- (0+4,-3) -- cycle;

\draw[thin] (8,0) rectangle (11,-3);
\draw[dashed] (8,-3) -- (9,-2) -- (12,-2);
\draw[dashed] (9,-2) -- (9,1);
\filldraw[black] (8,0) circle (1.5pt);
\filldraw[black] (8,-3) circle (1.5pt);
\filldraw[black] (11,0) circle (1.5pt);
\filldraw[black] (11,-3) circle (1.5pt);
\filldraw[black] (9,1) circle (1.5pt);
\filldraw[black] (9,-2) circle (1.5pt);
\filldraw[black] (11.5,-2.5) circle (1.5pt);
\filldraw[black] (12,-2) circle (1.5pt);

\fill[gray,opacity = 0.3] (8,0) rectangle (11,-3);
\fill[gray,opacity = 0.1] (8,0) -- (9,1) -- (12,1) -- (11,0);
\fill[gray,opacity = 0.5] (11,-3) -- (11,0) -- (11.5,0.5) -- (11.5,-2.5) -- cycle;
\fill[gray,opacity = 0.5] (11.5,-2.5) -- (11.5,-1) -- (12,-0.5) -- (12,-2) -- cycle;

\draw[thin] (8,0) -- (9,1) -- (12,1);
\draw[thin] (11,0) -- (11.5,0.5);
\draw[red] (11.5,0.5) -- (12,1) -- (12,1-1.5) --(11.5,-1) -- cycle;
\fill[red, opacity = 0.5] (11.5,0.5) -- (12,1) -- (12,1-1.5) --(11.5,-1) -- cycle;
\draw[thin] (11,-3) -- (11.5,-2.5) -- (11.5,-1);
\draw[thin] (11.5,-2.5) -- (12,-2) -- (12,-0.5);

\filldraw[red] (12,1) circle (1.5pt);
\filldraw[red] (12,-0.5) circle (1.5pt);
\filldraw[red] (11.5,0.5) circle (1.5pt);
\filldraw[red] (11.5,-1) circle (1.5pt);
\end{tikzpicture}
\end{center}

It is useful to have labels for all the hyperfaces. For $\mathcal{F}_n$, the labels are $d^0_i$ for $1 \leq i \leq n$, $d^1_i$ for $2 \leq i \leq n$ and $d^2_i$ for $1 \leq i \leq n$. The previosly defined distinguished hyperface is labelled $d_n^2$. The assignment is again given by an inductive procedure. For $1 \leq i \leq n$ and $\epsilon \in \{0,1\}$, let $e_i^0$ denote the face of the cube $[0,1]^n$ with coordinates $(x_1, \ldots, x_{i-1}, \epsilon, x_{i+1}, \ldots, x_n)$. For $\mathcal{F}_1$ label the vertex 0 by $d^0_1$ and label the vertex $1$ by $d^2_1$. Now assume that all the hyperfaces of $\mathcal{F}_{n-1}$ are labelled. Then hyperfaces of $\mathcal{F}_n$ viewed as a subdivision of $[0,1]^n$ are labelled according to the following table:\\

\begin{center}
 \begin{tabular}{|c|c|} 
 \hline
 {\bf Face in $\mathcal{F}_{n-1} \times [0,1]$} & {\bf Label in $\mathcal{F}_n$} \\ 
 \hline
 $e^0_i$, $1 \leq i \leq n$ & $d^0_i$  \\ 
 \hline
  $e^1_i$, $2 \leq i \leq n$ & $d^1_i$  \\ 
  \hline
  $d^2_i \times [0,1]$, $1 \leq i \leq n-2$ & $d^2_i$  \\
    \hline
  $d^2_{n-1} \times [0,\frac{1}{2}]$ & $d^2_{n-1}$  \\
    \hline
    $d^2_{n-1} \times [\frac{1}{2},1]$ & $d^2_{n}$  \\ 
 \hline
\end{tabular}
\end{center}

The picture below illustrates the labels for $\mathcal{F}_2$ and $\mathcal{F}_3$, both in their cubical and simplicial incarnations. The colors in dimension 3 are simply for user-friendliness.

\begin{center}
\begin{tikzpicture}
\draw[thin] (0,0) rectangle (3,-3);
\filldraw[black] (0,0) circle (1.5pt);
\filldraw[black] (0,-3) circle (1.5pt);
\filldraw[black] (3,0) circle (1.5pt);
\filldraw[black] (3,-1.5) circle (1.5pt);
\filldraw[black] (3,-3) circle (1.5pt);

\fill[gray, opacity = 0.3] (0,0) -- (3,0) -- (3,-3) -- (0,-3) -- cycle;

\draw[thin] (1.5+4,1.5-3) -- (3+4,3-3) -- (4.5+4,1.5-3);
\draw[thin] (1.5+4,1.5-3) -- (2+4,0-3);
\draw[thin] (4.5+4,1.5-3) -- (4+4,0-3);
\draw[thin] (2+4,0-3) -- (4+4,0-3);

\node[anchor = north] at (7,-3) {$d^1_2$};
\node[anchor = east] at (5.7,-2.3) {$d^0_1$};
\node[anchor = east] at (6.2,-0.6) {$d^0_2$};
\node[anchor = west] at (8.3,-2.3) {$d^2_2$};
\node[anchor = west] at (7.8, -0.6) {$d^2_1$};

\filldraw[black] (1.5+4,1.5-3) circle (1.5pt);
\filldraw[black] (3+4,3-3) circle (1.5pt);
\filldraw[black] (4.5+4,1.5-3) circle (1.5pt);
\filldraw[black] (2+4,0-3) circle (1.5pt);
\filldraw[black] (4+4,0-3) circle (1.5pt);

\fill[gray,opacity = 0.3] (2+4,0-3) -- (1.5+4,1.5-3) -- (3+4,3-3) -- (4.5+4,1.5-3) -- (4+4,0-3) -- cycle;

\node[anchor = east] at (0,-1.5) {$d^0_1$};
\node[anchor = north] at (1.5,-3) {$d^0_2$};
\node[anchor = south] at (1.5,0) {$d^1_2$};
\node[anchor = west] at (3,-0.75) {$d^2_2$};
\node[anchor = west] at (3,-0.75-1.5) {$d^2_1$};

\end{tikzpicture}
\end{center}

\begin{center}
\begin{tikzpicture}
\fill[violet, opacity = 0.2] (1,1) rectangle (4,-2);

\draw[thin] (0,0) rectangle (3,-3);
\draw[dashed] (0,-3) -- (1,-2) -- (4,-2);
\draw[dashed] (1,-2) -- (1,1);
\filldraw[black] (0,0) circle (1.5pt);
\filldraw[black] (0,-3) circle (1.5pt);
\filldraw[black] (3,0) circle (1.5pt);
\filldraw[black] (3,-3) circle (1.5pt);
\filldraw[black] (1,1) circle (1.5pt);
\filldraw[black] (1,-2) circle (1.5pt);
\filldraw[black] (3.5,-2.5) circle (1.5pt);
\filldraw[black] (4,-2) circle (1.5pt);

\fill[blue,opacity = 0.3] (0,0) rectangle (3,-3);
\fill[gray,opacity = 0.1] (0,0) -- (1,1) -- (4,1) -- (3,0);
\fill[gray,opacity = 0.5] (3,-3) -- (3,0) -- (3.5,0.5) -- (3.5,-2.5) -- cycle;
\fill[gray,opacity = 0.5] (3.5,-2.5) -- (3.5,-1) -- (4,-0.5) -- (4,-2) -- cycle;

\draw[thin] (0,0) -- (1,1) -- (4,1);
\draw[thin] (3,0) -- (3.5,0.5);
\draw[black] (3.5,0.5) -- (4,1) -- (4,1-1.5) --(3.5,-1) -- cycle;
\fill[gray, opacity = 0.5] (3.5,0.5) -- (4,1) -- (4,1-1.5) --(3.5,-1) -- cycle;
\draw[thin] (3,-3) -- (3.5,-2.5) -- (3.5,-1);
\draw[thin] (3.5,-2.5) -- (4,-2) -- (4,-0.5);

\filldraw[black] (4,1) circle (1.5pt);
\filldraw[black] (4,-0.5) circle (1.5pt);
\filldraw[black] (3.5,0.5) circle (1.5pt);
\filldraw[black] (3.5,-1) circle (1.5pt);

\node[anchor = south] at (2,0.2) {$d^1_3$};

\draw[thin] (3.5,1.5) -- (3,1); 
\draw[dashed,->] (3,1) -- (2.5,0.5);
\node[anchor = south] at  (3.5,1.5){$d^1_2$};

\draw[thin] (-0.5,-1) -- (0,-1);
\draw[dashed, ->] (0,-1) -- (0.5,-1);
\node[anchor = east] at  (-0.5,-1) {$d^0_1$};

\node[anchor = west] at (1.5,-1.5) {$d^0_2$};

\draw[thin] (2,-3.5) -- (2,-3);
\draw[dashed,->] (2,-3) -- (2,-2.5);
\node[anchor = north] at (2,-3.5) {$d^0_3$};

\node[anchor = west] at (3.45,0) {$d^2_3$};

\node[anchor = west] at (3.45,-1.6) {$d^2_2$};

\node[anchor = north] at (3.25,-0.8) {$d^2_1$};


\begin{scope}[shift={(4,-3)}, scale = 1.3]

\draw[thin] (2,0) -- (1.23,1.17) -- (1.64,1.31) --(2,1.2) -- cycle;

\draw[thin] (1.23,1.17) -- (2.32,2.81) -- (2.81,3) -- (1.64,1.31);

\filldraw[black] (1.23,1.17) circle (1.5pt);
\filldraw[black] (2,0) circle (1.5pt);
\filldraw[black]  (1.64,1.31) circle (1.5pt);
\filldraw[black]  (2.81,3) circle (1.5pt);
\filldraw[black] (2.32,2.81) circle (1.5pt);
\filldraw[black]  (2,1.2) circle (1.5pt);

\draw[dashed] (7-1.23,1.17) -- (7-1.64,1.31) -- (7-2,1.2) -- (7-2,0);
\draw[thin] (7-2,0) -- (7-1.23,1.17);

\draw[thin] (7-1.23,1.17) -- (7-2.32,2.81) -- (7-2.81,3);

\draw[dashed] (7-2.81,3) -- (7-1.64,1.31);

\filldraw[black] (7-1.23,1.17) circle (1.5pt);
\filldraw[black] (7-2,0) circle (1.5pt);
\filldraw[black]  (7-1.64,1.31) circle (1.5pt);
\filldraw[black]  (7-2.81,3) circle (1.5pt);
\filldraw[black] (7-2.32,2.81) circle (1.5pt);
\filldraw[black]  (7-2,1.2) circle (1.5pt);

\draw[thin] (2,0) -- (5,0);
\draw[thin] (2,1.2) -- (7-2.32,2.81);
\draw[thin] (7-2.81,3) -- (2.81,3);
\draw[dashed] (2.32,2.81) --  (5,1.2);

\fill[green,opacity = 0] (2,0) -- (1.23,1.17) -- (2.32,2.81) -- (5,1.2) -- (5,0) -- cycle;

\fill[violet,opacity = 0.4] (2,0) -- (2,1.2) -- (7-2.32,2.81) -- (7-1.23,1.17) -- (5,0) -- cycle;

\fill[gray,opacity = 0.3] (2,0) -- (2,1.2) -- (1.64,1.31) -- (1.23,1.17) -- cycle;

\fill[blue,opacity = 0.3] (1.64,1.31) -- (1.23,1.17) -- (2.32,2.81) -- (2.81,3) -- cycle;

\fill[gray,opacity = 0.1] (2,1.2) -- (1.64,1.31) -- (2.81,3) -- (7 -2.81,3) -- (7-2.32,2.81) -- cycle;
\end{scope}

\node[anchor = west] at (5.9,-1.8)
{$d^0_1$};
\node[anchor = west] at (8.2,-1.8) {$d^1_2$};
\node[anchor = west] at (7,-0.5) {$d^0_3$};
\draw[->] (6,0) -- (6.6,-0.4);
\node[anchor = south east] at (6,0) {$d^0_2$};

\draw[thin] (8.5,-3.5) node[anchor = north] {$d^1_3$} -- (8.5,-3);
\draw[dashed,->] (8.5,-3) -- (8.5,-2.5);

\draw[thin] (8.5,1.5) node[anchor = south] {$d^2_1$} -- (8.5,0.9);
\draw[dashed,->] (8.5,0.9) -- (8.5,0.4);

\begin{scope}[shift = {(0.2,0)}]
\draw[->,dashed] (11.2,-1.8) -- (10.6,-1.8);
\draw (11.5,-1.8) --(11.2,-1.8);
\node[anchor = west] at (11.5,-1.8)  {$d^2_3$};
\end{scope}

\draw[->] (11.1,0.1) -- (10.5,-0.4);

\node[anchor = south west] at (11.1,0.1) {$d^2_2$};
\end{tikzpicture}
\end{center}

In general, the table below explains which cubic hyperface corresponds to which hyperplane in the truncated simplex. For the hyperface of the original simplex containing all the vertices except for $i$, the corresponding hyperplane is denoted by $D_i$.\\

\begin{center}
 \begin{tabular}{|c|c|} 
 \hline
 {\bf Cubic label} & {\bf Hyperplane in simplicial incarnation} \\ 
 \hline
$d^0_i$, $i \leq n-1$ & $Q_{i-1}$ \\ 
 \hline
$d^0_n$ & $D_n$\\
\hline 
$d^1_i$ & $D_{i-1}$\\
\hline 
 $d^2_1$ & $D_0$ \\
 \hline
 $d^2_i$, $i \geq 2$ & $P_{n-i}$ \\
 \hline
\end{tabular}
\end{center}

\begin{rem}
Cubically interpreted freehedra appear in \cite{Cha}, where a surprising connection with Dyck paths is studied.
\end{rem}

\subsection{Freehedra combinatorially}
The purely combinatorial definition of freehedra has the benefit that faces of all codimensions obtain labels. These labels are used in the main theorem of the paper.

\begin{defi} A nice $n$-expression is an expression $$s = s_l] [s_{l+1}] \ldots [s_k] | [s_0] \ldots [s_{l-1}]$$ where 
\begin{itemize}
    \item (the absence of the opening bracket for $s_l$ is not a typo)
    \item every stretch $s_i$ is a nonempty subset of $\{0,1,\ldots,n \}$
    \item for every $i$, $\max s_i = \min s_{i+1}$
    \item $|s_i| \geq 2$ if $i \neq l$ ($|s_l| = 1$ is allowed)
    \item $\min s_0 = 0$ and $\max s_k = n$
    \item in the case $l=0$ $s_0$ is placed to the left of the bar
\end{itemize}
\end{defi}

Every face of $\mathcal{F}_n$  is labelled with a nice $n$-expression. For a nice expression $s$ as above, let $L$ be the number of elements $i \in \{0,1,\ldots, n\}$ that are not present in $s$. Then the codimension of the corresponding face is $l+L$.\\

 Examples of such expression for $n = 3$ are $3]|[01][13]$ (of codimension $2+1 = 3$) or $023]|$ (of codimension $0+1 = 1$).

\begin{defi}
Consider a nice $n$-expression $s$ as above:
$$s = s_l] [s_{l+1}] \ldots [s_k] | [s_0] \ldots [s_{l-1}]$$
The {\em face transformations} that can be applied to $s$ are: 
\begin{enumerate}
    \item Drop: for some stretch $s_j$ remove some $x \in s_j$ with $\min s_j < x < \max s_j$.
    \item Inner break: replace some stretch $[s_j]$ with $[s^1_j][s^2_j]$ where $s^1_j = \{a \leq x| a \in s_j \}$ and $s^2_j = \{a \geq x| a \in s_j \}$ for some  $x \in s_j$ with $\min s_j < x < \max s_j$.
    \item Right outer break: replace the stretch $s_l]$ with $s^1_l][s^2_l]$ where $s^1_l = \{a \leq x| a \in s_l \}$ and $s^2_l = \{a \geq x| a \in s_l \}$ for some $x \in s_l$ with $x < \max s_l$.
    \item Left outer break: for $x \in s_l$ with $\min s_l < x$, replace the stretch $s_l]$ with $\{  a \geq x \ | a \in s_l \}]$, and add the stretch $[\{ a \leq x | a \in s_l \}]$ to the end of the expression after $s_{l-1}$.
\end{enumerate}
\end{defi}

For example, the expression $23]|[012]$ can be transformed into $23]|[02]$ by a drop, or into $23]|[01][12]$ by an inner break, or into $2][23]|[012]$ by a right outer break, or into $3]|[012][23]$ by a left outer break.

\begin{defi}
In $\mathcal{F}_n$, a face labelled $s'$ is a codimension 1 subface of a face labelled $s$ if $s'$ can be obtained from $s$ by one of the face transformations.
\end{defi}

The resulting abstract polytopes are precisely freehedra. The cubical notation for hyperfaces translates into into combinatorial notation for hyperfaces like this: 

\begin{itemize}
    \item $d^0_i$ corresponds to $0\ldots i-1][i-1 \ldots n]|$;
    \item $d^1_i$ corresponds to $0\ldots \widehat{i-1} \ldots n]|$, where the hat means the omission;
    \item $d^2_i$ corresponds to $i \ldots n]|[0 \ldots i]$.

\end{itemize}

Below are nice $2$-expressions and their face transformation shown on  $\mathcal{F}_2$. Drops are labelled D, inner breaks are labelled IB, left outer breaks are labelled LOB and right outer breaks are labelled ROB.

\begin{center}
\begin{tikzpicture}
\draw[thin] (0,0) rectangle (6,6);
\filldraw[black] (0,0) circle (1.5pt);
\filldraw[black] (0,6) circle (1.5pt);
\filldraw[black] (6,0) circle (1.5pt);
\filldraw[black] (6,6) circle (1.5pt);
\filldraw[black] (6,3) circle (1.5pt);

\node[scale = 2] (A) at (3,3) {$012]|$};
\node[scale = 1.5, anchor = east] (1) at (0,3) {$0][012]|$};

\node[scale = 1.5, anchor = north] (2) at (3,0) {$[01][12]|$};
\node[scale = 1.5, anchor = south] (3) at (3,6) {$02]|$};
\node[scale = 1.5, anchor = west] (4) at (6,1.5) {$12]|[01]$};
\node[scale = 1.5, anchor = west] (5) at (6,4.5) {$2]|[012]$};

\node[anchor = north east] (11) at (0,0) {$0][01][12]|$};
\node[anchor = north west] (22) at (6,0) {$1][12]|[01]$};
\node[anchor = west] (33) at (6,3) {$2]|[01][12]$};
\node[anchor = south west] (44) at (6,6) {$2]|[02]$};
\node[anchor = south east] (55) at (0,6) {$0][02]|$};

\draw[->, shorten > = 5pt] (A) -- node[above,blue,scale = 1.5]{ROB} (1);
\draw[->, shorten > = 5pt] (A) -- node[left,blue,scale = 1.5]{ROB}(2);
\draw[->, shorten > = 5pt] (A) -- node[left,blue,scale = 1.5]{D}(3);
\draw[->, shorten > = 5pt] (A) -- node[below left,blue, scale = 1.5]{LOB} (4);
\draw[->, shorten > = 5pt] (A) -- node[above left,blue, scale = 1.5]{LOB} (5);

\draw[->] (1) edge [bend right=45] node[left,blue]{IB} (11);
\draw[->] (5) edge [bend left=70] node[right,blue]{IB} (33);
\draw[->] (4) edge [bend right=70] node[right,blue]{LOB} (33);
\draw[->] (5) edge [bend right=70] node[right,blue]{D} (44);

\draw[->] (3) edge [bend left=45] node[above,blue]{LOB} (44);
\draw[->] (4) edge [bend left=65] node[right,blue]{ROB} (22);

\draw[->] (3) edge [bend right=45] node[above,blue]{ROB} (55);
\draw[->] (1) edge [bend left=45] node[left,blue]{D} (55);

\draw[->] (2) edge [bend right=45] node[below,blue]{LOB} (22);
\draw[->] (2) edge [bend left=45] node[below,blue]{ROB} (11);

\end{tikzpicture}
\end{center}

\section{Main isomorphism}
We establish an isomorphism $I$ between the set of nice expressions and the forest-tree-forest basis of $T$ from Prop \ref{isom2}. 

\begin{con}
Consider a nice $n$-expression 
$$s = s_l] [s_{l+1}] \ldots [s_k] | [s_0] \ldots [s_{l-1}]$$

We form the forest-tree-forest triple $I(s) = (F,T,G)$ as follows. Every stretch gives rise to a separate tree. The stretch $s_l$ produces $T$, the stretches $s_i$ for $i>l$ (located to the left of the bar) produce the trees of $F$ and the stretches $s_i$ for $i<l$ (located to the right of the bar) produce the trees of $G$. The trees are assembled into the triple in the following order:
$$ (F,T,G) = (\iota(s_{k}) \circ \ldots \circ \iota(s_{l+1}), \iota(s_l), \iota(s_{l-1}) \circ \ldots \circ \iota(s_{0}))$$
It remains to explain $\iota$. For a stretch $s = a_1 < \ldots <a_m$, $\iota(s)$ is a tree with $m-1$ branches, where the number of leaves on the $j$th branch is $a_{j+1}-a_j$.
\end{con}

\begin{prop}
The map $I$ above is a bijection.
\end{prop}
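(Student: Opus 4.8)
The plan is to construct an explicit two-sided inverse $J$ to the map $I$, reading off a nice $n$-expression from a forest-tree-forest triple $(F,T,G)$ with a total of $n$ leaves. The key observation is that $\iota$ is already visibly invertible on individual trees: given a tree $\tau$ with $m-1$ branches carrying $c_1,\dots,c_{m-1}$ leaves respectively, the only stretch $s = a_1 < \dots < a_m$ with $\iota(s) = \tau$ is the one with $a_1$ fixed and $a_{j+1} = a_j + c_j$. Thus the content of the proof is really bookkeeping: showing that the constraints defining a \emph{nice} $n$-expression (the matching condition $\max s_i = \min s_{i+1}$, the endpoint conditions $\min s_0 = 0$, $\max s_k = n$, the size conditions, and the placement of $s_l$ relative to the bar) correspond exactly to the data of a triple of the prescribed shape with $n$ leaves total.

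First I would spell out $J$. Given $(F,T,G)$ with $F = (F^k,\dots,F^{l+1})$ written in the order the trees are composed (so $F^{l+1}$ is applied first, outermost), $T$ the middle tree, and $G = (G^{l-1},\dots,G^0)$, I set the starting point of the leftmost stretch to $0$ and then propagate: each tree's stretch begins where the previous one ended, using the rule above that the $j$th branch having $c_j$ leaves forces the increment $a_{j+1}-a_j = c_j$. Concretely one runs through $s_0, s_1, \dots$ accumulating leaf counts; the matching condition $\max s_i = \min s_{i+1}$ is automatic by construction, and the total-leaf-count $n$ is exactly what makes $\max s_k = n$. The tree $T$ (coming from the stretch $s_l$, the unique one allowed to have size $1$) is precisely the tree allowed by Prop.~\ref{isom2} to be arbitrary, and a single-branch-free tree $T$ — i.e. $T$ with zero branches, the $\id_\m$ corolla — corresponds to $|s_l| = 1$; this is the case where the combinatorics says no opening bracket precedes $s_l$. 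The bar is placed right after the trees coming from $F$ and $T$, i.e. between $s_l$ and $s_{l-1}$, and the degenerate case $l=0$ (no trees in $F$, $T$ is the first stretch) is exactly the stipulation ``$s_0$ is placed to the left of the bar''.

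Next I would check $J \circ I = \id$ and $I \circ J = \id$. Both are immediate once the stretch-to-tree and tree-to-stretch correspondences are seen to be mutually inverse on each stretch/tree, plus the observation that $I$ and $J$ use the \emph{same} ordering convention for assembling trees into the triple and the \emph{same} rule for where the bar goes. The only thing to verify with any care is well-definedness of $J$: that the output is genuinely a nice $n$-expression, i.e. that every $s_i$ with $i \neq l$ has $|s_i| \geq 2$ (true because every tree other than $T$ has at least one branch — the forests $F,G$ are genuine forests of depth-2 trees, each tree has $b_i \geq 1$ branches, giving $|s_i| = b_i + 1 \geq 2$), that the $s_i$ are subsets of $\{0,\dots,n\}$ in increasing order (immediate from the propagation rule and positivity of leaf counts on branches), and that $\min s_0 = 0$, $\max s_k = n$.

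I expect the main obstacle to be purely notational rather than mathematical: the indexing in the Construction runs the left forest $F$ ``backwards'' relative to composition order ($\iota(s_k) \circ \dots \circ \iota(s_{l+1})$ means $s_{l+1}$ is outermost), and one must be careful that $J$ reverses this the same way, and that the side-of-the-bar convention together with the ``$l=0$'' special case is handled consistently. Once the conventions are pinned down, bijectivity is a short formal verification; I would present $J$ explicitly, note that $\iota$ restricted to each stretch is a bijection onto depth-2 planar trees (with the convention that the empty-branch tree corresponds to a singleton stretch), and conclude.
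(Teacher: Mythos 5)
Your proposal is correct and takes essentially the same route as the paper: the paper's proof is exactly the explicit inverse you describe, reading off the expression by accumulating leaf counts branch by branch starting from the rightmost tree of $G$ (the stretch $s_0$, anchored at $0$). One small slip worth fixing: in the degenerate case $l=0$ it is $G$ (the stretches $s_i$ with $i<l$), not $F$, that is empty --- $F$ may still contain the trees $\iota(s_k)\circ\dots\circ\iota(s_1)$.
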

\begin{proof}
Having a tree-forest-tree triple $(F,T,G)$ with $n$ leaves, we form a nice $n$-expression $s = I^{-1}(F,T,G)$ as follows. Start from the rightmost branch of the rightmost tree of $G$ and move left, adding one symbol for one branch within the tree, and beginning the new stretch for the new tree. To form the next symbol of the current stretch, add the number of leaves on the current branch to previous symbol.
\end{proof}

\begin{prop}The map $I$ provides an isomorphism of chain complexes
$$ C_*(\mathcal{F}_n) \simeq  T(\aaa^n,\m;\m)$$
\end{prop}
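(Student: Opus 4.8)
The plan is to promote the set-theoretic bijection $I$ of the previous proposition to a chain map and then observe that a bijective chain map is an isomorphism of chain complexes. Since $I$ is already known to match bases, the only thing left is to check that it is compatible with differentials, i.e. that $I(d s) = d\,I(s)$ for every nice $n$-expression $s$. Both sides are already described combinatorially: the differential on $T(\aaa^n,\m;\m)$ is the sum over neighbouring branch pairs of the unite and separate operations $U$ and $S$ (with the $S_l + S_r$ splitting in the middle tree) plus the two ``detaching'' terms $(F\circ T,1,G)$ and $(F,1,T\circ G)$, while the differential on $C_*(\mathcal F_n)$ is the sum over codimension-one subfaces, which by the combinatorial description of freehedra is precisely the sum over the four face transformations (drop, inner break, right outer break, left outer break).

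First I would set up a term-by-term dictionary between the two differentials under $I$. The inner breaks of a stretch $s_i$ with $i \neq l$ should correspond exactly to the separate operation $S$ applied to a branch pair inside the tree $\iota(s_i)$ living in $F$ or $G$; a drop inside such a stretch should correspond to the unite operation $U$ on the neighbouring branch pair of $\iota(s_i)$. The right outer break of $s_l$ corresponds to $S$ applied inside the middle tree on its ``left-hand'' side, producing the $S_l$ summand $(F\circ T_l, T_r, G)$; the left outer break of $s_l$ corresponds to the $S_r$ summand $(F, T_l, T_r\circ G)$, since moving a chunk of $s_l$ to the far right of the expression is exactly pre-composing the right forest by $T_l$. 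A drop inside $s_l$ corresponds to $U$ on a branch pair of the middle tree $T$. Finally, the two detaching terms $(F\circ T, 1, G)$ and $(F, 1, T\circ G)$ — coming from $d(f_n)$ acquiring the identity-arity-one component — correspond to the degenerate cases of the outer breaks where the broken-off stretch is a singleton (recall $|s_l| = 1$ is the only place a singleton stretch is allowed, so breaking $s_l$ so that one side has length $1$ gives a new expression whose middle stretch is a single element, i.e. whose middle tree is the $1$-corolla).

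A careful bookkeeping point is the role of the number $L$ of missing elements of $\{0,\dots,n\}$ and the index $l$ in the codimension formula $l + L$: I would check that $I$ sends codimension-$k$ faces to basis elements of degree $n - k$ (or whatever normalization Prop.~\ref{isom2} fixes via $t(F) - b(F)$ over the whole triple, extended to the triple), so that the differential indeed drops degree by one on both sides; this is a direct count matching leaves, branches and trees of $(F,T,G)$ against the symbols and stretches of $s$. The remaining work is the sign comparison: orient each face of $\mathcal F_n$ (equivalently each cube face, via the cubical incarnation) and compare with the Koszul signs produced by the operadic composition and the $A_\infty^{\operatorname{Col}}$/$M_\infty^{\operatorname{Col}}$ differentials transported through $\Omega$ and $T$.

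The main obstacle I expect is precisely this sign coherence: the combinatorial face poset of $\mathcal F_n$ only records incidences, so one must independently fix orientations (most cleanly by pulling back the standard orientation of $[0,1]^n$ along the cubical subdivision description of $\mathcal F_n$) and then verify that the four face-transformation signs agree with the signs $\pm U$, $\pm S$ and the signs of the two detaching terms coming from the explicit formula for $d(f^{\m}_n)$ in $M_\infty^{\operatorname{Col}}$ pushed to $T$. Once the term-by-term correspondence of the preceding paragraph and the signs are both checked, $I$ is a bijective degree-preserving chain map, hence an isomorphism of chain complexes, which is the claim.
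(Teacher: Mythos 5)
Your proposal is correct and takes essentially the same route as the paper's proof: a term-by-term matching of the summands $U$, $S$ restricted to $F$ or $G$, $S_l$, $S_r$, $(F\circ T,1,G)$ and $(F,1,T\circ G)$ of $d(F,T,G)$ against drops, inner breaks and the two outer breaks, with degrees and signs left implicit in both treatments. The one discrepancy is that you pair the right outer break with $S_l$ and $(F\circ T,1,G)$ and the left outer break with $S_r$ and $(F,1,T\circ G)$, which is the mirror image of items (4)--(5) in the paper's proof; tracing the degenerate breaks $012]|\to 0][012]|$ (ROB) and $012]|\to 2]|[012]$ (LOB) on $\mathcal{F}_2$ actually supports your pairing, so this is a left/right convention issue rather than a gap --- though your parenthetical ``pre-composing the right forest by $T_l$'' should read $T_r$ to be consistent with the formula $S_r=(F,T_l,T_r\circ G)$ that you yourself cite.
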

\begin{proof}
We only need to verify that the resulting map of graded vector spaces is consistent with differentials. Consider a face of $\mathcal{F}_n$ labelled with a nice $n$-expression $s = s_l] [s_{l+1}] \ldots [s_k] | [s_0] \ldots [s_{l-1}]$, with $I(s) = (F,T,G)$. We go through the list of summands in $d(F,T,G)$ from Prop \ref{isom2}.

\begin{enumerate}
    \item The summands $U((F,T,G),B)$ for any $B$ correspond to drop transformations.
    \item The summands $S((F,T,G), B \subset F)$ correspond to inner break transformations at stretches $s_i$ for $i>l$.
    \item The summands $S((F,T,G), B \subset G)$ correspond to inner break tranformations at stretches $s_i$ for $i<l$.
    \item The summand $(F \circ T, 1, G)$ and the summands $S_l((F,T,G), B \subset T)$ correspond to left outer breaks.
    \item The summand $(F, 1, T \circ G)$ and the summands $S_r((F,T,G), B \subset T)$ correspond to right outer breaks.
\end{enumerate}
\end{proof}

Therefore we may think of forest-tree-forest triples as another collection of labels for the faces of freehedra. Recall that forests gave a collection of labels for the faces of cubes, as in Prop \ref{cubes}.

\begin{prop}
Freehedra form an CW-operadic bimodule over the CW-operad of cubes.
\end{prop}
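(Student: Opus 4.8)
The plan is to establish the statement by transporting the operadic pair structure on $(\Omega, T)$ through the isomorphisms $\Omega(\aaa^n,\m;\m) \simeq C_*(I^{n-1})$ of Prop.~\ref{cubes} and $T(\aaa^n,\m;\m) \simeq C_*(\mathcal{F}_n)$ of the preceding proposition, and then checking that the transported structure is realized geometrically by tree grafting on cubes and freehedra. Concretely: the operad of cubes already has its operadic structure (noted after Prop.~\ref{cubes}), so what remains is to define left and right actions of this CW-operad on the CW-sequence $\{\mathcal{F}_n\}$ and a comultiplication, and to verify that applying $C_*(-)$ recovers the operadic $\Omega$-bimodule structure on $T$ together with the coalgebra maps $c$ and $\epsilon$.

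First I would describe the actions combinatorially. By Prop.~\ref{isom2} a cell of $\mathcal{F}_n$ is a forest-tree-forest triple $(F,T,G)$; the operadic bimodule structure on $T$ is, per the proposition following it, forest concatenation when composing with operations in $\Omega(\aaa^k,\m;\m)$ (i.e.\ short forests, i.e.\ cells of a cube) and leaf multiplication when composing with operations in $\Omega(\aaa^k;\aaa)$. So I would define: for a cube cell (short forest) $\Phi$ plugged into the module input, left-graft $\Phi$ onto $G$ (equivalently, concatenate $\Phi$ to the right end of the triple as part of $G$); for a cube cell plugged into an algebra-colored input, multiply the corresponding block of leaves somewhere in the triple. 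The content is that each of these operations, performed on \emph{cells}, is induced by an actual continuous map of polytopes — i.e.\ that grafting a face of $I^{k-1}$ into a face of $\mathcal{F}_n$ along the prescribed rule extends to a cellular map $\mathcal{F}_n \times I^{k-1} \to \mathcal{F}_{n+k-1}$ (and similarly for the other input slots and for $I^{k-1} \times \mathcal{F}_n \to \mathcal{F}_{\dots}$). I would verify this by working in the cubical incarnation of freehedra (Section 4.2): since $\mathcal{F}_n$ is a subdivision of $\mathcal{F}_{n-1} \times [0,1]$ sitting inside $[0,1]^n$, and cubes multiply by literal products of cubes, the grafting maps can be written down as explicit (piecewise-linear, subdivision-respecting) maps of cubical complexes. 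The face-poset compatibility is then exactly the statement, already proved in the previous proposition, that $I$ intertwines the face transformations of $\mathcal{F}_\bullet$ with the differential of $T$; combined with the known cube structure this upgrades the cellular-chain identities to genuine CW-maps.

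Next I would check the operadic-bimodule axioms (associativity of left action, of right action, and their commutation) and the counit/comultiplication compatibility at the level of CW-complexes. The cleanest route is: apply $C_*$, note that $C_*$ of these grafting maps equals (by the two established isomorphisms and the explicit formulas for $\Omega$-composition and for $c$, $\epsilon$) the corresponding algebraic structure maps of the operadic pair $(\Omega,T)$, and then invoke faithfulness of $C_*$ on CW-complexes whose attaching data we control — more precisely, since a cellular map is determined on the level of the face poset up to the cellular chain map it induces, and we already know the chain maps agree with associative/coassociative structure, the diagrams of CW-maps commute because the underlying poset maps do. Here the Remark after the proposition on $C_*(\mathcal{K},\mathcal{J})$ is relevant: the comultiplication $c$ involves sums, so strictly speaking $(\text{cubes},\{\mathcal{F}_n\})$ forms a CW-operadic \emph{bimodule} (with a geometric grafting action) whose cellular chains carry the extra coalgebra structure of the operadic \emph{pair} $(\Omega,T)$; I would state the proposition in exactly that form, matching the convention used earlier for $(\mathcal{K},\mathcal{J})$.

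The main obstacle I anticipate is purely geometric: showing that the face-poset-level grafting rule genuinely lifts to a continuous (cellular, ideally PL) map of the polytopes, rather than merely a map of face posets. For associahedra and multiplihedra this lift is classical (tree grafting realizes as inclusion of a product of faces), but freehedra are not themselves operadic objects, so the grafting map $\mathcal{F}_n \times I^{k-1} \to \mathcal{F}_{n+k-1}$ is a new map that must be exhibited. I expect the right tool is the cubical/subdivision description: realize all three polytopes as subdivided cubes, write the grafting as a cube map $[0,1]^n \times [0,1]^{k-1} \cong [0,1]^{n+k-1}$ that is cellular for the freehedral subdivisions on source and target (this amounts to checking that the subdivision of the target refines the product subdivision along the relevant sub-cube), and then the continuity and cellularity are immediate while the poset-compatibility is the content of the previous proposition. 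Once the lift exists, all remaining verifications are routine bookkeeping with tree grafting, formally identical to the associahedron/multiplihedron case.
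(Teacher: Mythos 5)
Your proposal is correct and takes essentially the same approach as the paper: the paper's entire proof is the single sentence that, in forest notation, the action is given by forest concatenation, which is precisely the combinatorial core of your argument. Your additional care about lifting the face-poset grafting rule to genuine continuous maps of polytopes (together with the minor index slip, $\mathcal{F}_{n+k-1}$ where the target of $\mathcal{F}_n\times I^{k-1}$ should be $\mathcal{F}_{n+k}$) goes beyond anything the paper supplies, so nothing in your plan conflicts with the published proof.
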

\begin{proof}
In forest notation, the action is by forest concatenation.
\end{proof}

The theorem below summarizes the results of this section.

\begin{theo}
The underlying pair of the DG-operadic pair $(\Omega,T)$ is $C_*(I,\mathcal{F})$.
\end{theo}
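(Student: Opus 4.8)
The plan is to assemble the ingredients already established in this section and the previous one, rather than to compute anything new. First I would invoke Proposition \ref{cubes}: it identifies $\Omega(\aaa^n,\m;\m)$ with $C_*(I^{n-1})$ compatibly with the forest labelling of cells, and the remark following it records that these identifications organize the cubes into a CW-operad whose cellular chains return $\Omega$ together with its composition maps (forest concatenation of two-coloured faces; leaf multiplication when grafting a one-coloured face). Next I would invoke the two bimodule-side results just proved: the map $I$ is a chain isomorphism $C_*(\mathcal{F}_n)\simeq T(\aaa^n,\m;\m)$, and the freehedra $\mathcal{F}$ form a CW-operadic bimodule over the CW-operad of cubes, with action given by forest concatenation.

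The only step with genuine content is to check that the functor of cellular chains carries the CW-operadic bimodule $\mathcal{F}$ over $I$ precisely to the operadic $\Omega$-bimodule $T$. I would argue that $C_*$ is lax monoidal for the relevant product on $\mathbb{N}$-sequences, hence sends CW-operadic bimodules to operadic bimodules in $\dgvect$, so it suffices to compare structure maps cell by cell. Under $I$, a face of $\mathcal{F}_n$ and its forest-tree-forest image are manipulated by the same combinatorial operation: grafting a two-coloured cubical face onto a freehedral face is forest concatenation on the $F$- or $G$-side, matching the $\Omega$-bimodule action on $T$ described after Proposition \ref{isom2}, while grafting the one-coloured interval (labelled by an unpainted corolla) is leaf multiplication, matching the ``remains unpainted / maximal painting'' prescription for the left and right actions in the contracted picture. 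Since $I$ is already known to be a chain map, the chosen cell orientations are compatible and no new sign bookkeeping arises.

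I expect the point requiring care to be twofold. First, one must make sure the polyhedral descriptions (grafting of cubes and freehedra) and the algebraic descriptions (composition and module maps of $\Omega$ and $T$) agree on the nose in the forest notation, including on the boundary strata where contraction has collapsed cells; this is where the earlier propositions do the real work and must be cited precisely. Second, I would stress why only the \emph{underlying} pair is claimed: exactly as for $(\mathcal{K},\mathcal{J})$ in the remark above, the comultiplication $c\co T\to T\otimes_\Omega T$ is an honest sum of forest-tree-forest triples and is not realized by any map of CW-complexes, so $(I,\mathcal{F})$ cannot be promoted to a CW-operadic pair; the coalgebra data of $(\Omega,T)$ survives only after passing to chain complexes. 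Modulo these remarks, concatenating the section's identifications yields $C_*(I,\mathcal{F})=(\Omega,T)$ as underlying pairs.
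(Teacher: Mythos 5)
Your proposal is correct and matches the paper's own treatment: the paper offers no separate argument for this theorem, stating only that it ``summarizes the results of this section,'' i.e.\ it is exactly the assembly of Proposition \ref{cubes}, the chain isomorphism $C_*(\mathcal{F}_n)\simeq T(\aaa^n,\m;\m)$, and the proposition that freehedra form a CW-operadic bimodule over the cubes via forest concatenation. Your additional remarks on the lax monoidality of $C_*$ and on why only the underlying pair (not the coalgebra structure) is realized topologically are consistent with the paper's own remark about $(\mathcal{K},\mathcal{J})$ and add useful precision rather than diverging from its route.
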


\section{Projections of polyhedra}
The operadic interpretation of freehedra equips them with a natural projection from multiplihedra. We now describe it explicitly in terms of painted trees and forest-tree-forest triples. Let $T$ be a painted binary tree corresponding to a vertex of $\mathcal{J}(n)$. The projection $\pi \co \mathcal{J}(n) \to \mathcal{F}(n)$ sends $T$ to a triple $\pi(T) = (F,1,G)$, where $G$ is formed from the unpainted subtree $T'$ containing the right leaf, and $F$ is formed from $T \backslash T'$ with painting forgotten. The procedure converting these binary trees to forests is the same for $T'$ and $T \backslash T'$.

\begin{con}
 Having a binary tree, we start from the right leaf and move towards the root. Whenever we encounter a branch $B$, we create a tree with one branch having as many leaves as eventually belong to the subtree starting at $B$ (the structure of this subtree is forgotten). These trees are arranged into a forest from right to left. \\
\end{con}

\begin{tikzpicture}
\node[scale = 2] (A) at (0,0) {\RS{B {AA LLl} {C {ALLlr} CR {Llr} {Rr}} } };
\node[scale = 2] (B) at (4,2) {\RS{I {Llr} {Rr}} };
\node[scale = 2] (C) at (4,-2) {\RS{ I {LLl} {R {L lr} {Rr} } }};
\node (E) at (7.5,-2) {\begin{forest}
for tree = {grow'=90,circle, fill, minimum width = 4pt, inner sep = 0pt, s sep = 13pt}
[{},phantom
[{},name = one [[]] ]
[{},name = two [[][]] ]
]
\draw[black] (one) -- (two);
\end{forest}};

\node (D) at (7,2) {\begin{forest}
for tree = {grow'=90,circle, fill, minimum width = 4pt, inner sep = 0pt, s sep = 13pt}
[{},phantom
[{},name = two [[][]] ]
]
\end{forest}};

\draw[->] (A) -- node[above] {$T'$} (B);
\draw[->] (A) -- node[above] {$T \backslash T'$} (C);

\draw[->] (B) -- node[above] {$F$} (D);
\draw[->] (C) -- node[above] {$G$} (E);

\end{tikzpicture}

The picture illustrates the construction of $\pi(T)$. The following proposition is now straightforward.

\begin{prop}
The projection $M_\infty^{\operatorname{Col}} \to T$ is induced by the above projection $\pi \co 
\mathcal{J} \to T$.
\end{prop}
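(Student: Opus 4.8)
The plan is to verify that the map $\pi\co \mathcal{J}\to \mathcal{F}$ constructed above is a morphism of CW-sequences (indeed of CW-operadic bimodules over $\mathcal{K}\to I^\bullet$), and then invoke functoriality of cellular chains. Since we already know from Prop.\ \ref{isom2} and the main isomorphism section that $C_*(\mathcal{F}_n)\simeq T(\aaa^n,\m;\m)$ and that $C_*(\mathcal{J}^{\operatorname{Col}})\simeq M_\infty^{\operatorname{Col}}$, and since $C_*$ is a functor from CW-complexes to chain complexes sending the $k$-corolla $C_k$ of $\mathcal{J}$ to $f^\m_k$, it suffices to check two things: first, that $\pi$ is a well-defined cellular map, i.e.\ that it is compatible with the face poset structures (admissible contractions of painted trees on the source, face transformations of nice expressions on the target); second, that on the top-dimensional cells $\pi$ induces exactly the algebra map $M_\infty^{\operatorname{Col}}\to T$ sending $f^\m_n\mapsto f^\m_n$, $f^\aaa_n\mapsto 0$ for $n>1$, $\mu^\aaa_n\mapsto 0$ for $n>2$, etc., so that after passing to chains the two maps coincide.

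First I would make precise the claim that $\pi$ is cellular: a painted binary tree $T$ indexing a vertex of $\mathcal{J}(n)$ is sent to the forest-tree-forest triple $(F,1,G)$ as in the Construction, and more generally a painted tree of higher codimension is sent by the same recipe (collapse the unpainted right subtree $T'$ into the forest $G$ by the branch-counting procedure, collapse $T\setminus T'$ into $F$ with painting forgotten, and record the middle tree as the trivial tree $1$). The verification is combinatorial: one checks that each of the three admissible contractions of a painted tree (contract an unpainted inner edge; contract an edge inner to $T_{\mathrm{painted}}$; contract a corolla of painted leaves) maps under $\pi$ either to a face transformation of the target triple (a drop or an inner break, via Prop.\ \ref{isom2}'s description of $d(F,1,G)$ restricted to the $U$ and $S\subset F$, $S\subset G$ summands) or to a degeneracy that is absorbed because the corresponding cell of $\mathcal{J}$ is collapsed. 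In particular the whole painted subtree $T_{\mathrm{painted}}$, which carries the ``multiplication direction'' data, gets contracted to a point, matching the fact that in $T$ the middle tree is always the trivial tree when we came from a painted-binary vertex and all $f^\aaa_i$, $i>1$, have been killed.

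Second I would identify the induced chain map on top cells. The top cell of $\mathcal{J}(n)$ is indexed by $C_n$, whose image $\pi(C_n)$ is the triple with $F$ trivial, middle tree $1$, and $G$ the corolla --- precisely the triple corresponding to $f^\m_n$ under Prop.\ \ref{isom2}, and this matches the generator of $M_\infty^{\operatorname{Col}}$ named $f^\m_n$ mapping to its image in $T$. Running $C_*(\pi)$ and using that $C_*(\mathcal{J}^{\operatorname{Col}})=M_\infty^{\operatorname{Col}}$, $C_*(\I,\mathcal{F})=(\Omega,T)$ (underlying pairs, by the Theorem of Section 5), one gets a map $M_\infty^{\operatorname{Col}}\to T$ agreeing on generators with the canonical quotient, hence equal to it, since both are bimodule maps determined by their values on the $f^\m_n$ and the operadic quotient $A_\infty^{\operatorname{Col}}\to\Omega$. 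This is what ``is induced by $\pi$'' means, so the proposition follows.

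The main obstacle will be the bookkeeping in the first step: showing that $\pi$ respects the face posets requires carefully matching, contraction by contraction, the admissible contractions of painted planar trees (which include the somewhat subtle ``contract a corolla of painted leaves'' move and the painting-consistency condition at each vertex) against the four face transformations of nice expressions, and verifying that exactly those contractions living entirely in $T_{\mathrm{painted}}$ or shrinking a painted corolla are the ones that get collapsed --- i.e.\ that $\pi$ is constant on each fiber it needs to be constant on and is otherwise face-transformation-compatible. Once this combinatorial compatibility is pinned down, functoriality of $C_*$ and the already-established identifications of chains with $(\Omega,T)$ and $(A_\infty^{\operatorname{Col}},M_\infty^{\operatorname{Col}})$ do the rest essentially formally, and one should also remark (as the text does right before the proposition) that $\pi$ is compatible with the operadic bimodule structures, so the induced map is a map of operadic bimodules over $A_\infty^{\operatorname{Col}}\to\Omega$ and not merely of chain complexes.
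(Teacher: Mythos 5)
Your overall strategy --- extend $\pi$ to a cellular map, check compatibility of the two face posets, identify the induced map on top cells, and invoke functoriality of $C_*$ --- is the natural way to substantiate what the paper simply declares ``straightforward'' (the paper supplies no proof at all). However, your execution goes wrong exactly where the content lies: in the description of $\pi$ on positive-dimensional cells. You assert that a general painted tree is sent to a triple whose middle tree is the trivial tree $1$. This is false: the middle tree of a triple records the surviving generator $f^\m_k$, and a painted corolla with $k>1$ inputs sitting on the module strand survives the quotient $M_\infty^{\operatorname{Col}}\to T$ as a nontrivial $f^\m_k$. Only for the \emph{vertices} of $\mathcal{J}(n)$ (painted \emph{binary} trees, where every $f$ occurring is unary) is the middle tree trivial, which is precisely why the paper states the construction of $\pi$ for vertices only. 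In particular your identification of $\pi(C_n)$ is incorrect: $C_n$ is the painted corolla, i.e.\ the generator $f^\m_n$, and under Prop.~\ref{isom2} this corresponds to the triple with \emph{both} forests empty and the middle tree equal to the corolla --- the top cell $0\,1\cdots]|$ of the target freehedron --- not to the triple with trivial middle tree and $G$ a corolla; the latter is the proper hyperface $f^\m_1\circ\mu^\m_{n}(\cdots)$ (the face $d^1_n$ of the freehedron). So the computation you rely on to match the induced chain map with the algebraic quotient is carried out incorrectly, even though the conclusion you want is true.

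Relatedly, your criterion for which cells collapse (``contractions living entirely in $T_{\operatorname{painted}}$ or shrinking a painted corolla'') is not the right one: the cells killed by $M_\infty^{\operatorname{Col}}\to T$ are exactly those whose operation contains a factor $\mu^\aaa_i$ with $i>2$ (painted or unpainted) or a factor $f^\aaa_i$ with $i>1$, while painted vertices on the module strand and the painted corolla on the module strand survive. To repair the argument, state the extension of $\pi$ to arbitrary painted trees correctly --- each surviving generator contributes a branch of $G$, of the middle tree, or of $F$ according to whether it occurs before, at, or after the painting boundary on the module strand, and a cell is collapsed precisely when a killed generator occurs --- and then rerun the face-poset comparison. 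The functoriality endgame is fine as you describe it.
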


The diagram below summarizes the projections between some families of polyhedra. Note that the projections from freehedra onto cubes and simplices are best seen at the simplicial incarnation of freehedra.

\begin{center}
 \begin{tikzpicture}

\node[] (Mult) {$\mathcal{J}(n)$};
\node[] (Assoc) [above right = of Mult] {$\mathcal{K}(n)$};
\node[] (Free) [below right = of Mult] {$\mathcal{F}(n)$};
\node[] (Cube) [below right = of Assoc] {$I^n$};
\node[] (Simp) [right = of Cube] {$\Delta^n$};
\node[] (Pt) [right = of Simp] {$*$};


\draw[->] (Mult) --  node[above left]{} (Assoc);
\draw[->] (Mult) --  node[below left]{} (Free);
\draw[->] (Assoc) --  node[above right]{} (Cube);
\draw[->] (Free) --  node[below right ]{} (Cube);
\draw[->] (Cube) --  node[above]{} (Simp);
\draw[->] (Simp) --  node[above]{} (Pt);


\end{tikzpicture}   
\end{center}


Every family of polytopes in this diagram can be interpreted operadically as the CW-counterpart of the DG-operadic bimodule in a certain $(a,m)$-colored DG-operadic pair. The partially informal table below lists these interpretations (we denote by $B$ the bimodule responsible for $A_\infty$-morphisms of DG-modules over DG-algebras).\\

\begin{center}
\begin{tabular}{|c||c|c|c|c||c|}
     \hline
     {\bf Polyhedra} & {\bf Algebras} & {\bf Modules} & {\bf \shortstack{Map of  \\ algebras}}& {\bf \shortstack{Map of  \\ modules}} & { \bf Pair}\\
     \hline
     $\mathcal{J}$ & $A_\infty$
     & $A_\infty$ & $A_\infty$ & $A_\infty$ & $(A_\infty^{\operatorname{Col}},M_\infty^{\operatorname{Col}})$ \\
     \hline 
     $\mathcal{K}$ & $A_\infty$
     & $A_\infty$ & strict & strict &  $(A_\infty^{\operatorname{Col}},A_\infty^{\operatorname{Col}})$ \\
     \hline 
     $\mathcal{F}$ & DG
     & $A_\infty$ & strict & $A_\infty$ & $(\Omega,T)$\\
     \hline 
     $I$ & DG
     & $A_\infty$ & strict & strict & $(\Omega,\Omega)$\\
     \hline 
     $\Delta$ & DG
     & DG & strict & $A_\infty$ & $(Ass^{\operatorname{Col}}, B)$ \\
     \hline
     $*$ & DG
     & DG & strict & strict & $(Ass^{\operatorname{Col}},Ass^{\operatorname{Col}})$ \\
    \hline
\end{tabular}
\end{center}

\begin{prop}
There exists the following diagram of projections between operadic pairs. Applying the functor of cellular chains to the diagram of polyhedral projections yields a part of this diagram -- namely, the bimodule part with output $\m$.

\[
\begin{tikzcd}[column sep = small]
 & [-25pt] (A_\infty^{\operatorname{Col}},A_\infty^{\operatorname{Col}})  \arrow{rd} & [-25pt] & . & . \\
(A_\infty^{\operatorname{Col}},M_\infty^{\operatorname{Col}}) \arrow{ru} \arrow{rd} & & (\Omega, \Omega) \arrow{r} & (Ass^{\operatorname{Col}},B) \arrow{r} & (Ass^{\operatorname{Col}},Ass^{\operatorname{Col}}) \\
. & (\Omega,T) \arrow{ru} & . & . & . \\
\end{tikzcd}
\]
\end{prop}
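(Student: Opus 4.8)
The plan is to realize every arrow in the diagram as a composite of elementary maps of three types --- quotient maps of operads, quotient maps of operadic bimodules, and the counit maps of the coalgebra structures --- and then to deduce the cellular-chains statement by matching these maps against the polyhedral models of Sections 3--5. Concretely, the operad quotients $A_\infty^{\operatorname{Col}}\twoheadrightarrow\Omega\twoheadrightarrow Ass^{\operatorname{Col}}$ (the second one killing all $\mu^\m_i$ with $i>2$) induce arrows between the corresponding trivial pairs, in particular $(A_\infty^{\operatorname{Col}},A_\infty^{\operatorname{Col}})\to(\Omega,\Omega)$; the bimodule quotient $M_\infty^{\operatorname{Col}}\twoheadrightarrow T$, covering $A_\infty^{\operatorname{Col}}\twoheadrightarrow\Omega$, gives $(A_\infty^{\operatorname{Col}},M_\infty^{\operatorname{Col}})\to(\Omega,T)$; and the counits $\epsilon\co M_\infty^{\operatorname{Col}}\to A_\infty^{\operatorname{Col}}$ and $\epsilon\co T\to\Omega$ give the two arrows landing in the trivial pairs. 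For the right-hand portion one first needs to pin down the pair $(Ass^{\operatorname{Col}},B)$: I would take $B$ to be the operadic bimodule over $Ass^{\operatorname{Col}}$ obtained from $T$ by imposing strictness of the algebra action --- equivalently, the pushforward of the $\Omega$-bimodule $T$ along $\Omega\twoheadrightarrow Ass^{\operatorname{Col}}$ --- with comultiplication $c(f^\m_n)=\sum f^\m_i\otimes f^\m_{n-i+1}$ and counit $\epsilon(f^\m_n)$ equal to $\id_\m$ for $n=1$ and $0$ otherwise. The remaining arrows are then a map $(\Omega,\Omega)\to(Ass^{\operatorname{Col}},B)$ covering $\Omega\twoheadrightarrow Ass^{\operatorname{Col}}$ and the counit $\epsilon\co B\to Ass^{\operatorname{Col}}$.

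Next I would verify that each of these is a morphism of operadic pairs and that the diagram commutes. For the arrows coming from quotients and counits this is routine: being a morphism amounts to having an operad map together with a bimodule map over it that respects comultiplication and counit, and compatibility is immediate because every comultiplication in sight is given by the single formula $c(f_n)=\sum f_i\otimes f_{n-i+1}$ in both colors, and every counit by $f_n\mapsto\id$ for $n=1$ and $0$ for $n>1$; these formulas are preserved verbatim by the quotient maps and intertwine with the operad quotients. Commutativity of the whole square-and-triangle diagram is then checked on the generators $\mu^\bullet_n$ and $f^\bullet_n$.

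For the cellular-chains statement I would run the polyhedral projections through the functor $C_*$, using the identifications already in place: $C_*(\mathcal{K}^{\operatorname{Col}})=A_\infty^{\operatorname{Col}}$ and $C_*(\mathcal{J}^{\operatorname{Col}})=M_\infty^{\operatorname{Col}}$ from Section 3, Proposition \ref{cubes} for cubes and $\Omega$, the isomorphism $C_*(\mathcal{F}_n)\simeq T(\aaa^n,\m;\m)$ of Section 5 for freehedra and $T$, together with $C_*(\Delta)$ as the cellular model for $B$ and $C_*(\ast)=Ass^{\operatorname{Col}}(\aaa^n,\m;\m)=k$ --- all restricted to the bimodule component with output $\m$. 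It then remains to match, cell by cell, each polyhedral projection with the corresponding operadic arrow: $\mathcal{J}\to\mathcal{K}$ realizes the counit $\epsilon\co M_\infty^{\operatorname{Col}}\to A_\infty^{\operatorname{Col}}$ (it collapses the top cell $C_n$ of $\mathcal{J}(n)$, so on chains $f_n\mapsto 0$ for $n>1$ while $f_1\mapsto\id$); $\mathcal{J}\to\mathcal{F}$ realizes the quotient $M_\infty^{\operatorname{Col}}\to T$, which is precisely the painted-tree-to-forest-triple projection $\pi$ described above; $\mathcal{K}\to I$ realizes $A_\infty^{\operatorname{Col}}\to\Omega$ by Proposition \ref{cubes}; $\mathcal{F}\to I$ realizes the counit $\epsilon\co T\to\Omega$; and the de-truncations $I\to\Delta\to\ast$ realize the passage to $B$ and then to $Ass^{\operatorname{Col}}$. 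Each match is verified by comparing images of the cells labelled by the relevant trees, forests, and nice expressions, using the explicit descriptions of the differentials and projections from Sections 4--6.

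I expect the main obstacle to be the pair $(Ass^{\operatorname{Col}},B)$ and its incoming arrow. Since $B$ is only described informally in the table, one has to give it a precise definition, check it is a counital coalgebra in $Ass^{\operatorname{Col}}$-bimodules, and --- the delicate point --- produce the map $(\Omega,\Omega)\to(Ass^{\operatorname{Col}},B)$ and confirm it is an honest morphism of operadic pairs, since the naive ``forget the higher module operations'' prescription need not respect the bimodule structure on the nose (for instance $f^\m_1\circ\mu^\m_2$ and $\mu^\m_2\circ(\id,f^\m_1)$ agree only up to $df^\m_2$). Getting this arrow right, and identifying it with the cube-to-simplex de-truncation of Section 4 on cellular chains, is where the real work lies; everything else is bookkeeping, chiefly with signs and with the shifts in polytope dimension forced by passing to the $(\aaa,\m)$-colored incarnations.
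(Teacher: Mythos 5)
Your plan follows the same route as the paper, which disposes of this proposition with the single line ``by direct inspection'': you decompose every arrow into operad quotients, bimodule quotients and counits, check compatibility on the generators $\mu^\bullet_n$ and $f^\bullet_n$, and then match each polyhedral projection with the corresponding operadic map through the cellular-chain identifications of Sections 3--5. All of your identifications of the individual arrows (counit for $\mathcal{J}\to\mathcal{K}$ and $\mathcal{F}\to I$, quotient for $\mathcal{J}\to\mathcal{F}$ and $\mathcal{K}\to I$, pushforward of $T$ along $\Omega\twoheadrightarrow Ass^{\operatorname{Col}}$ for $B$) agree with what the paper intends, so in that sense you are reconstructing the intended argument, only with the details written out.

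The one place where you go beyond the paper is also the place where you are right to worry. The arrow $(\Omega,\Omega)\to(Ass^{\operatorname{Col}},B)$ is genuinely delicate, and the paper's proof does not address it: a bimodule map $\Omega\to B$ over $\Omega\twoheadrightarrow Ass^{\operatorname{Col}}$ is forced to send $\id_\m$ to $f^\m_1$ and must then identify $f^\m_1\circ\mu^\m_2$ with $\mu^\m_2(f^\aaa_1,f^\m_1)$, which in $B$ agree only up to $d f^\m_2$ --- exactly the obstruction you name. There is also an arity bookkeeping issue you should confront explicitly: with the paper's conventions $\Omega(\aaa^n,\m;\m)\simeq C_*(I^{n-1})$ while $B(\aaa^n,\m;\m)\simeq C_*(\Delta^n)$, so the cellular chains of a dimension-preserving projection $I^k\to\Delta^k$ cannot literally be the component of a map of $\mathbb{N}$-sequences in a fixed arity. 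Either the notion of ``projection of operadic pairs'' has to be relaxed for this arrow (e.g.\ to a map defined only up to homotopy, or to a statement about the induced functors on algebra categories), or the arrow has to be re-routed through $(\Omega,T)\to(Ass^{\operatorname{Col}},B)$, where the bimodule map is an honest quotient. Your proposal correctly isolates this as the real work; as written, neither your sketch nor the paper actually completes it.
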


\begin{proof}
By direct inspection.
\end{proof}

\begin{rem}
The table above lists not all possible quotients of $(A_\infty^{\operatorname{Col}},M_\infty^{\operatorname{Col}})$, just the ones that are encountered in real life more frequently than never. For example, one can also consider the operadic pair controlling $A_\infty$-modules over DG-algebras, where morphisms are allowed to be $A_\infty$ both for algebras and for modules. This results in a family of where the 3-dimensional polyhedron has polygon score $(0,8,0,4)$ but does not yet appear in the Encyclopedia of Combinatorial Polytope Sequences yet. So operadic pairs can be used as a tool for obtaining new polyhedral families.
\end{rem}

\section{Hopf operadic pairs}
In the closing section we briefly discuss the diagonals for operadic pairs. The category of colored $\mathbb{N}$-sequences ${\mathbb{N}}$-$\operatorname{Seq}_{\operatorname{Col}}(\C)$ is equipped with a second tensor product, given by 
$$(\mathcal{P} \boxtimes \mathcal{Q})(c_1,\ldots,c_n;c) = \mathcal{P}(c_1,\ldots,c_n;c) \otimes \mathcal{Q}(c_1,\ldots,c_n;c)$$
This tensor product has the property that for an operad $\mathcal{P}$, $\mathcal{P} \boxtimes \mathcal{P}$ is also an operad. The definition and the proposition below are classical.
\begin{defi}
An operad $\mathcal{P}$ is called Hopf if it is equipped with a coassociative diagonal $\Delta_{\mathcal{P}} \co \mathcal{P} \to \mathcal{P} \boxtimes \mathcal{P}$. 
\end{defi}

\begin{prop}
\label{hopf}
For a Hopf operad $\mathcal{P}$, the category $\operatorname{Alg}(\mathcal{P})$ is monoidal.
\end{prop}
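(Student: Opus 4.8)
The plan is to show that a Hopf diagonal $\Delta_\mathcal{P}\co \mathcal{P}\to\mathcal{P}\boxtimes\mathcal{P}$ lets us equip the category $\operatorname{Alg}(\mathcal{P})$ with a tensor product by pulling back the componentwise algebra structure along $\Delta_\mathcal{P}$. First I would check the key observation already noted in the text: if $A$ and $B$ are $\mathcal{P}$-algebras with structure maps $\chi_A\co\mathcal{P}\to\underline{\operatorname{End}}_A$ and $\chi_B\co\mathcal{P}\to\underline{\operatorname{End}}_B$, then $\underline{\operatorname{End}}_A\boxtimes\underline{\operatorname{End}}_B$ is an operad (the $\boxtimes$-product of operads being an operad) and there is a natural operad map $\underline{\operatorname{End}}_A\boxtimes\underline{\operatorname{End}}_B\to\underline{\operatorname{End}}_{A\otimes B}$ sending $(\phi,\psi)\in\underline{\operatorname{Hom}}(A^{\otimes n},A)\otimes\underline{\operatorname{Hom}}(B^{\otimes n},B)$ to the composite $(A\otimes B)^{\otimes n}\cong A^{\otimes n}\otimes B^{\otimes n}\xrightarrow{\phi\otimes\psi}A\otimes B$, using the symmetry of $\C$ to shuffle the tensor factors. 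Composing $\chi_A\boxtimes\chi_B$ with this map and precomposing with $\Delta_\mathcal{P}$ gives $\mathcal{P}\to\mathcal{P}\boxtimes\mathcal{P}\to\underline{\operatorname{End}}_A\boxtimes\underline{\operatorname{End}}_B\to\underline{\operatorname{End}}_{A\otimes B}$, which is the desired $\mathcal{P}$-algebra structure on $A\otimes B$.

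Next I would verify functoriality in morphisms: a morphism $f\co A\to A'$ in $\operatorname{Alg}(\mathcal{P})$ is a map commuting with all $\chi$'s, and $f\otimes g\co A\otimes B\to A'\otimes B'$ is then seen to commute with the pulled-back structure maps by a diagram chase that only uses that $f$ and $g$ individually intertwine the $\mathcal{P}$-actions together with naturality of the shuffle isomorphisms. This makes $\otimes$ a bifunctor $\operatorname{Alg}(\mathcal{P})\times\operatorname{Alg}(\mathcal{P})\to\operatorname{Alg}(\mathcal{P})$. Then I would check the monoidal axioms. The unit object is $\mathbb{I}$ (the monoidal unit of $\C$) with the $\mathcal{P}$-algebra structure coming from the counit of the Hopf structure — more precisely, using that $\Delta_\mathcal{P}$ is a diagonal one typically also requires a counit $\mathcal{P}\to\underline{\mathbb{I}}$-type augmentation, or one takes the unit structure on $\mathbb{I}$ to be induced by the trivial action; I would spell out whichever convention $\mathcal{P}$-Hopf carries. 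The associativity constraint $(A\otimes B)\otimes C\cong A\otimes(B\otimes C)$ is the one inherited from $\C$, and checking it is a morphism of $\mathcal{P}$-algebras reduces exactly to \emph{coassociativity} of $\Delta_\mathcal{P}$; similarly the left and right unit constraints reduce to the counit axioms. The pentagon and triangle identities then hold because they already hold in $\C$ and all constraint morphisms are those of $\C$.

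The main obstacle — really the only nontrivial point — is the coherence bookkeeping: one must check that the two ways of forming $A\otimes B\otimes C$, namely iterating the binary tensor product in the two possible orders and applying the associator of $\C$, produce the \emph{same} $\mathcal{P}$-algebra structure. Unwinding the definitions, the structure map on $(A\otimes B)\otimes C$ is $\mathcal{P}\xrightarrow{\Delta_\mathcal{P}}\mathcal{P}\boxtimes\mathcal{P}\xrightarrow{\Delta_\mathcal{P}\boxtimes\id}(\mathcal{P}\boxtimes\mathcal{P})\boxtimes\mathcal{P}\to\underline{\operatorname{End}}_A\boxtimes\underline{\operatorname{End}}_B\boxtimes\underline{\operatorname{End}}_C\to\underline{\operatorname{End}}_{A\otimes B\otimes C}$, and on $A\otimes(B\otimes C)$ it is the same with $\id\boxtimes\Delta_\mathcal{P}$; these agree precisely by coassociativity of $\Delta_\mathcal{P}$ together with the (strict) associativity and coherence of the $\boxtimes$-monoidal structure on $\mathbb{N}$-$\operatorname{Seq}_{\operatorname{Col}}(\C)$ and of $\otimes$ on $\C$. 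So the whole argument is formal once one sets up the operad map $\underline{\operatorname{End}}_A\boxtimes\underline{\operatorname{End}}_B\to\underline{\operatorname{End}}_{A\otimes B}$ carefully; I would present that map and then say the monoidal axioms follow by direct inspection, inherited from $\C$, with coassociativity of $\Delta_\mathcal{P}$ supplying the associator compatibility.
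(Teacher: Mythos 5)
Your proof is correct, and in fact it supplies an argument where the paper gives none: the proposition is stated as ``classical'' and left unproved, so there is no in-paper proof to compare against. Your route is the standard one -- pull back along $\Delta_{\mathcal{P}}$ the composite $\mathcal{P}\boxtimes\mathcal{P}\to\underline{\operatorname{End}}_A\boxtimes\underline{\operatorname{End}}_B\to\underline{\operatorname{End}}_{A\otimes B}$, check bifunctoriality, and reduce the associator compatibility to coassociativity of $\Delta_{\mathcal{P}}$ -- and the coherence bookkeeping you describe is exactly what is needed.

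Two of your side remarks deserve emphasis, because they identify hypotheses the paper leaves implicit. First, the interchange map $(A\otimes B)^{\otimes n}\cong A^{\otimes n}\otimes B^{\otimes n}$ needs a symmetry (or at least a braiding) on $\C$; the paper only assumes $\C$ closed monoidal with sums, but in the application $\C=\dgvect$, which is symmetric, so nothing breaks. Second, the paper's definition of a Hopf operad asks only for a coassociative diagonal and no counit, so strictly speaking one obtains a non-unital (semigroupal) structure on $\operatorname{Alg}(\mathcal{P})$ unless one additionally fixes an augmentation $\mathcal{P}\to\underline{\operatorname{End}}_{\mathbb{I}}$ to make $\mathbb{I}$ the unit object. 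You flag both points correctly; with those conventions spelled out, your argument is complete.
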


For any operad $\mathcal{P}$ with an operadic bimodule $\mathcal{M}$, the sequence $\mathcal{M} \boxtimes \mathcal{M}$ is an operadic bimodule over $\mathcal{P} \boxtimes \mathcal{P}$. For a Hopf operad, one can at both sides restrict along the diagonal $\Delta_{\mathcal{P}} \co \mathcal{P} \to \mathcal{P} \boxtimes \mathcal{P}$, and thus view $\mathcal{M} \boxtimes \mathcal{M}$ as a bimodule over $\mathcal{P}$ itself. This suggests the following new definition.

\begin{defi}
An operadic pair $(\mathcal{P},\mathcal{M})$ is called {\em strictly} Hopf, if $\Omega$ is a Hopf operad and there is a coassociative map of counital coalgebras $\Delta_{\mathcal{M}} \co \mathcal{M} \to \mathcal{M} \boxtimes \mathcal{M}$.
\end{defi}

\begin{prop}
For a strictly Hopf operadic pair $(\mathcal{P},\mathcal{M})$, the category $\operatorname{Alg}(\mathcal{P},\mathcal{M})$ is monoidal.
\end{prop}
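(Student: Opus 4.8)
The plan is to mimic the classical argument for Hopf operads (Prop.~\ref{hopf}), upgrading it to track morphisms. Given a strictly Hopf operadic pair $(\mathcal{P},\mathcal{M})$ and two objects $A,B \in \operatorname{Alg}(\mathcal{P},\mathcal{M})$ — that is, two $\mathcal{P}$-algebras with structure maps $\chi_A \co \mathcal{P} \to \underline{\operatorname{End}}_{\{A_c\}}$ and $\chi_B \co \mathcal{P} \to \underline{\operatorname{End}}_{\{B_c\}}$ — I would first produce the tensor product $A \otimes B$ as an object, i.e.\ as a $\mathcal{P}$-algebra. This is exactly Prop.~\ref{hopf}: the diagonal $\Delta_{\mathcal{P}} \co \mathcal{P} \to \mathcal{P} \boxtimes \mathcal{P}$ composed with the natural map $\underline{\operatorname{End}}_{\{A_c\}} \boxtimes \underline{\operatorname{End}}_{\{B_c\}} \to \underline{\operatorname{End}}_{\{A_c \otimes B_c\}}$ (using that $\C$ is symmetric monoidal, so one can interleave the tensor factors) gives the required structure map, and coassociativity of $\Delta_{\mathcal{P}}$ makes the tensor product associative up to the coherence isomorphisms inherited from $\C$.

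Next I would handle morphisms. Given morphisms $f \co A \to A'$ and $g \co B \to B'$ in $\operatorname{Alg}(\mathcal{P},\mathcal{M})$, encoded by bimodule structure maps $\chi_f \co \mathcal{M} \to \underline{\operatorname{Hom}}_{\{A_c\},\{A'_c\}}$ and $\chi_g \co \mathcal{M} \to \underline{\operatorname{Hom}}_{\{B_c\},\{B'_c\}}$, I want to build $f \otimes g \co A \otimes B \to A' \otimes B'$. The tensor $\chi_f \boxtimes \chi_g$ is a map $\mathcal{M} \boxtimes \mathcal{M} \to \underline{\operatorname{Hom}}_{\{A_c\},\{A'_c\}} \boxtimes \underline{\operatorname{Hom}}_{\{B_c\},\{B'_c\}}$, and the latter maps naturally into $\underline{\operatorname{Hom}}_{\{A_c \otimes B_c\},\{A'_c \otimes B'_c\}}$ by interleaving. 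Precomposing with the coalgebra diagonal $\Delta_{\mathcal{M}} \co \mathcal{M} \to \mathcal{M} \boxtimes \mathcal{M}$ yields the desired structure map $\mathcal{M} \to \underline{\operatorname{Hom}}_{\{A_c \otimes B_c\},\{A'_c \otimes B'_c\}}$, hence a morphism $f \otimes g$. One must check this is a genuine morphism of $\mathcal{P}$-bimodules, which follows because $\Delta_{\mathcal{M}}$ is a map of $\mathcal{P}$-bimodules (where $\mathcal{M} \boxtimes \mathcal{M}$ carries the $\mathcal{P}$-action restricted along $\Delta_{\mathcal{P}}$) and because $\boxtimes$ and the interleaving map are compatible with the operadic actions.

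The remaining verifications are the categorical axioms. Functoriality of $\otimes$ — that $(f' \circ f) \otimes (g' \circ g) = (f' \otimes g') \circ (f \otimes g)$ and that identities go to identities — uses that $\Delta_{\mathcal{M}}$ is a \emph{map of counital coalgebras}: compatibility with the comultiplication $c \co \mathcal{M} \to \mathcal{M} \otimes_{\mathcal{P}} \mathcal{M}$ is precisely what makes $\Delta$ interact correctly with composition (which is induced by $c$), and compatibility with the counit $\epsilon \co \mathcal{M} \to \mathcal{P}$ is what makes it interact correctly with identity morphisms (which are induced by $\epsilon$). The associator, unitors, and pentagon/triangle coherences are then inherited from those of $\C$ together with coassociativity of $\Delta_{\mathcal{P}}$ and $\Delta_{\mathcal{M}}$, so they hold essentially for formal reasons.

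The main obstacle is bookkeeping rather than conceptual: one must set up the ``interleaving'' natural transformations $\underline{\operatorname{End}}_{\{A_c\}} \boxtimes \underline{\operatorname{End}}_{\{B_c\}} \to \underline{\operatorname{End}}_{\{A_c \otimes B_c\}}$ and its $\underline{\operatorname{Hom}}$-analogue carefully — verifying they are maps of operads (resp.\ of operadic bimodules over the $\boxtimes$-operad) — and then chase the resulting diagrams to confirm that the coalgebra-map axioms on $\Delta_{\mathcal{M}}$ translate exactly into functoriality of $\otimes$ on morphisms. I would state the interleaving maps as a lemma, prove they are morphisms in the relevant category by unwinding the definition of $\boxtimes$, and then present the functoriality check as the one genuine computation, flagging that everything else is inherited from the symmetric monoidal structure of $\C$.
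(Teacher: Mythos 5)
Your proposal is correct and follows essentially the same route as the paper: the tensor product of morphisms is defined by precomposing $\chi_{f}\boxtimes\chi_{g}$ with $\Delta_{\mathcal{M}}$ and postcomposing with the natural map into $\underline{\operatorname{Hom}}$ of the tensor products, with coassociativity of $\Delta_{\mathcal{M}}$ giving associativity and the coalgebra-map property giving compatibility with composition and identities. Your write-up is merely more explicit about the interleaving maps and the coherence bookkeeping than the paper's brief argument.
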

\begin{proof}
The tensor product of objects follows from the Hopf structure on $\mathcal{P}$ via Prop \ref{hopf}. Consider $\mathcal{P}$-algebras $X^1 = \{X^1_c \}$, $X^2 = \{X^2_c \}$, $Y^1 = \{ Y^1_c \}$ and $Y^1 = \{ Y^1_c \}$, with morphisms $f^1 \co X^1 \to Y^1$ and $f^2: X^2 \to Y^2$, given by characteristic maps $\chi_{f^1} \co \mathcal{M} \to \underline{\operatorname{Hom}}_{X^1,Y^1}$ and $\chi_{f^2} \co \mathcal{M} \to \underline{\operatorname{Hom}}_{X^2,Y^2}$. Then the characteristic map $\chi_{f^1 \otimes f^2} \co \mathcal{M} \to \underline{\operatorname{Hom}}_{X^1 \otimes X^2,Y^1 \otimes Y^2}$ is the following composition:

\[
\begin{tikzcd}[column sep = huge]
 M \arrow[dashed]{r} \arrow{d}{\Delta_{\mathcal{M}}} & \underline{\operatorname{Hom}}_{X^1 \otimes X^2,Y^1 \otimes Y^2} \\
 M \boxtimes M \arrow{r}{\chi_{f_1} \otimes \chi_{f_2}} & \underline{\operatorname{Hom}}_{X^1,Y^1} \otimes \underline{\operatorname{Hom}}_{X^2,Y^2} \arrow{u} 
\end{tikzcd}
\]

Associativity of this tensor product follows from coassociativity of $\Delta_{\mathcal{M}}$, and consistency with compositions follows from  $\Delta_{\mathcal{M}}$ being a map of coalgebras.
\end{proof}

Unfortunately, strictly Hopf DG-operadic pairs are a rare beast, with $(\Omega, T)$ being an important non-example. $\Omega$ is indeed a Hopf operad, with a formula for $\Delta_{\Omega}$ given in Cor. 5.10 of ACD. The formula for $\Delta_T$ is given in Prop. 7.4 of ACD, but this $\Delta_T$ is neither coassociative nor a map of coalgebras. Both properties only hold up to homotopy. \\

The original constructions for $\Delta_{\Omega}$ and $\Delta_T$ are purely algebraic and involve some choices. The results of the current paper suggest that both $\Delta_{\Omega}$ and $\Delta_T$ can be interpreted as known diagonals for polyhedral families. These are obtained with the help of a partial order on faces. Assume that all cubes are embedded into $\mathbb{R}^n$ as $[0,1]^n$, and that their subdivisions into freehedra are rectangular.

\begin{defi}
For $v_1$ and $v_2$ vertices either of $I^n$ or of $\mathcal{F}(n)$, we say $v_1 \leq v_2$ if the inequality holds coordinatewise.
\end{defi}
\begin{defi}
For $F_1$ and $F_2$ faces either of $I^n$ or of $\mathcal{F}(n)$, we say $F_1 \leq F_2$ if $\max F_1 \leq \min F_2$.
\end{defi}

Then the following formula from Saneblidze defines both the cubic diagonal $\Delta \co C_*(I^n) \to C_*(I^n) \otimes C_*(I^n)$ and the freehedral diagonal $\Delta \co C_*(\mathcal{F}(n))  \to C_*(\mathcal{F}(n)) \otimes C_*(\mathcal{F}(n))$:
$$\Delta(F) = \sum_{\substack{F_1, F_2 \subset F,\text{ } F_1 \leq F_2 \\ \dim F_1 + \dim F_2 = \dim F }} F_1 \otimes F_2$$

\begin{prop}
For appropriate choices, Abad-Crainic-Dherin diagonals $\Delta_\Omega$ and $\Delta_T$ coincide with Saneblidze diagonals given by the formula above.
\end{prop}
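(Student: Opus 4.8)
The plan is to make all three diagonals completely explicit in the combinatorial language developed above and then compare them directly, treating the operad $\Omega$ and the bimodule $T$ separately. For $\Omega$: under Prop.~\ref{cubes} a basis element of $\Omega(\aaa^n,\m;\m)$ is a word in the letters $a$, $b$, $c$ read off a short forest, and these words are precisely the faces of $I^{n-1}$ under $a\leftrightarrow\{0\}$, $b\leftrightarrow[0,1]$, $c\leftrightarrow\{1\}$. First I would rewrite the Abad--Crainic--Dherin diagonal $\Delta_\Omega$ (Cor.~5.10 of \cite{ACD}) in this notation and check that it is the concatenation-multiplicative extension of the Serre diagonal of the interval, $\Delta(b)=b\otimes c+a\otimes b$, $\Delta(a)=a\otimes a$, $\Delta(c)=c\otimes c$; this is a routine translation. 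On the other side, Saneblidze's formula $\Delta(F)=\sum_{F_1\leq F_2,\ \dim F_1+\dim F_2=\dim F}F_1\otimes F_2$ on $[0,1]^{n-1}$ with the coordinatewise order is, by inspection, exactly this same Serre diagonal on the cube. Hence the two agree. Since the isomorphisms of Prop.~\ref{cubes} arrange the cubes into a CW-operad and both diagonals respect the operadic structure (concatenation/leaf multiplication on one side, coordinatewise products of faces on the other), one can even reduce this comparison to the interval; in any case this cubical case is essentially classical.

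For $T$ the strategy is the same but the bookkeeping is heavier. A basis element of $T(\aaa^n,\m;\m)$ is a forest--tree--forest triple $(F,T,G)$, identified through the isomorphism $I$ of Section 5 with a face of $\mathcal{F}_n$ and, via the model of Section 4, with a nice $n$-expression; the top cell $f^\m_n$ corresponds to the one-stretch expression $0\,1\cdots n]|$. I would (i) transport Saneblidze's freehedral diagonal to the forest--tree--forest basis, which amounts to translating the coordinatewise order on faces of the rectangularly subdivided cube $[0,1]^n$ into a combinatorial order on nice $n$-expressions -- most of this reduces to understanding $\leq$ on the hyperface labels $d^0_i$, $d^1_i$, $d^2_i$ and propagating it to lower-dimensional faces; (ii) transport the Abad--Crainic--Dherin diagonal $\Delta_T$ (Prop.~7.4 of \cite{ACD}) to the same basis through $I$; and (iii) match the two sums term by term, tracking signs, and in the process pin down the choices left open in the \cite{ACD} construction (a section, equivalently an ordering) under which the two coincide -- this is the content of ``for appropriate choices''. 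It is natural to verify the match first on the generators $f^\m_n$, since both $\Delta_T$ and the transported Saneblidze diagonal respect the CW-operadic bimodule action of cubes on freehedra (forest concatenation versus coordinatewise products of faces), and $T$ is generated as an $\Omega$-bimodule by the top cells; the general case then follows by applying $\Delta_\Omega$ on the cubical factors and invoking the already-settled cubical comparison.

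The hard part will be steps (i)--(iii) for $T$: making the coordinatewise order on freehedral faces fully explicit in the language of nice expressions / forest--tree--forest triples, and then carrying out the term-by-term identification with the algebraically defined $\Delta_T$ of \cite{ACD}, including the determination of the correct section and of the sign normalization. A secondary point requiring care is consistency of the non-formal structure: since \cite{ACD}'s $\Delta_T$ is only homotopy-coassociative and only a homotopy map of the coalgebra-in-bimodules structure $(c,\epsilon)$, one must check that Saneblidze's freehedral diagonal, as given by the displayed poset formula with this subdivision and orientation, fails these properties in exactly the same way, and that the reduction to generators above uses only that $\Delta_T$ is a genuine (not merely up-to-homotopy) map of the underlying operadic bimodules, which it is. Once these compatibilities are in place, the final comparison should be manageable.
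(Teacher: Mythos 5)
There is nothing in the paper to compare your argument against: the author explicitly does not prove this proposition, stating that ``the proof requires translating the original constructions of $\Delta_\Omega$ and $\Delta_T$ to operadic language, which is technically involved,'' and deferring it to a follow-up paper on weakly Hopf operadic pairs. Your outline is consistent with that intended strategy --- identify both diagonals in a common combinatorial basis (words in $a,b,c$ for $\Omega$, forest--tree--forest triples / nice expressions for $T$) and match terms --- and your cubical half is essentially right: Saneblidze's poset formula on $[0,1]^{n-1}$ does reduce to the Serre diagonal $\Delta(b)=a\otimes b+b\otimes c$, $\Delta(a)=a\otimes a$, $\Delta(c)=c\otimes c$, extended multiplicatively, and this is the standard comparison for the cube.

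However, as a proof of the proposition your text does not go through, because the entire content of the statement is concentrated in the steps you label (i)--(iii) for $T$ and then defer as ``the hard part'': making the coordinatewise order on faces of the rectangularly subdivided cube explicit on nice expressions, translating Prop.~7.4 of \cite{ACD} through the isomorphism $I$, identifying the ``appropriate choices,'' and fixing signs. None of this is carried out, so nothing is actually verified beyond the classical cubical case. Two of your auxiliary claims also need justification rather than assertion: that the Saneblidze diagonal on a face of $\mathcal{F}_n$ is compatible with the CW-operadic bimodule action (the poset formula is defined via a specific embedding and subdivision, so agreement with the diagonal of the corresponding product of cubes and a smaller freehedron is not automatic), and that \cite{ACD}'s $\Delta_T$ is a strict map of the underlying $\Omega$-bimodules over $\Delta_\Omega$ (the paper only records that coassociativity and compatibility with $(c,\epsilon)$ fail up to homotopy; the bimodule-map property is what your reduction to the generators $f^\m_n$ rests on). As it stands, your submission is a plausible research plan for the deferred proof, not a proof.
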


The proof requires translating the original constructions of $\Delta_\Omega$ and $\Delta_T$ to operadic language, which is technically involved. Thus we delay the proof until the follow up paper, where we define {\em weakly} Hopf operadic pairs and upgrade $( \Delta_\Omega, \Delta_T)$ to weakly Hopf structure.


\begin{thebibliography}{}


\bibitem[AC]{AC} C.A. Abad, M. Crainic. ``Representations up to homotopy and Bott’s spectral sequence for Lie groupoids". Advances in Mathematics, 248, 416-452 (2013).



\bibitem[ACD]{ACD} C.A. Abad, M. Crainic, B. Dherin. ``Tensor products of representations up to homotopy". Journal of Homotopy and Related Structures, 6(2), 239–288 (2011).

\bibitem[Cha]{Cha} F. Chapoton. ``Some properties of a new partial order on Dyck paths". Algebraic Combinatorics, 3(2), 433-463 (2020).


\bibitem[For]{For} S. Forcey.  ``Convex Hull Realizations of the Multiplihedra". Topology and its Applications, 56(2), 326-347 (2008).

\bibitem[Kel]{Kel} B. Keller. ``Introduction to A-infinity algebras and modules". Homology, Homotopy and Applications, 3(1), 1-35 (2001).
 


\bibitem[RS]{RS} M. Rivera, S. Saneblidze.  ``A combinatorial model for the free loop fibration". Bulletin of the London Mathematical Society, 50(6), 1085-1101 (2018).

\bibitem[San]{San} S. Saneblidze.  ``The bitwisted Cartesian model for the free loop fibration". Topology and its Applications, 156(5), 897-910 (2009).


\bibitem[Sta]{Sta} J. Stasheff. ``Homotopy associativity of H-spaces. I, II". Transactions of the American Mathematical Society, 108, 293–312 (1963).




\bibitem[Tam]{Tam} D. Tamari. ``Monoïdes préordonnés et chaînes de Malcev", Thèse, Université de Paris (1951).



\end{thebibliography}
\end{document}